\documentclass[reqno]{amsart}
\usepackage{latexsym,amssymb,bbm,amsmath,mathabx,amsthm}
\usepackage[pdftex]{graphicx}
\usepackage[scr=boondox]{mathalfa}
\usepackage[mathscr]{euscript}
\usepackage{rotating}
\usepackage{color}
\usepackage[color,emtex,matrix,arrow]{xy}
\usepackage{xypic}
\usepackage{comment}
\usepackage{bbold}
\usepackage{calligra}
\usepackage[T1]{fontenc}
\usepackage{calrsfs}
\usepackage{tikz}

\newtheorem{thm}{\sc Theorem}[section]
\newtheorem{cor}[thm]{\sc Corollary}
\newtheorem{ex}[thm]{\sc Example}
\newtheorem{exs}[thm]{\sc Examples}
\newtheorem{notacio}[thm]{\sc Notation}

\newtheorem{prop}[thm]{\sc Proposition}
\newtheorem{prop-defn}[thm]{\sc Proposition-Definition}
\newtheorem{defn}[thm]{\sc Definition}
\newtheorem{rem}[thm]{\sc Remark}
\numberwithin{equation}{section}

\newcommand{\Cc}{\mathcal{C}}
\newcommand{\Dd}{\mathcal{D}}

\newcommand{\Gg}{\mathcal{G}}

\newcommand{\Rr}{\mathcal{R}}

\newcommand{\Rsc}{\mathscr{R}}
\newcommand{\Ssc}{\mathscr{S}}

\newcommand{\BB}{\mathbb{B}}
\newcommand{\CC}{\mathbb{C}}
\newcommand{\DD}{\mathbb{D}}

\newcommand{\FF}{\mathbb{F}}
\newcommand{\GG}{\mathbb{G}}

\newcommand{\RR}{\mathbb{R}}

\newcommand{\ZZ}{\mathbb{Z}}

\newcommand{\Asf}{\mathsf{A}}
\newcommand{\Bsf}{\mathsf{B}}

\newcommand{\Gsf}{\mathsf{G}}
\newcommand{\Hsf}{\mathsf{H}}

\newcommand{\Rsf}{\mathsf{R}}

\newcommand{\Zsf}{\mathsf{Z}}

\newcommand{\asf}{\mathsf{a}}

\newcommand{\ksf}{\mathsf{k}}

\newcommand{\zsf}{\mathsf{z}}

\def\1{^{-1}}
\def\cref#1#2#3{\left(#2\right.\left|\ #3\right)_{#1}}

\let\lim\varprojlim

\begin{document}

\title{On the (algebraic) notion of 2-ring}
\author{Josep Elgueta}
\date{}
\address{Departament de Matem\`atiques, Universitat Polit\`ecnica de Catalunya}
\email{josep.elgueta@upc.edu}
\keywords{symmetric 2-group, categorical ring}

\maketitle

\begin{abstract}
By a 2-ring we mean a groupoid with a structure analogous to that of a ring, up to coherent isomorphisms. Two different notions of 2-ring appear in the literature: the notion of {\em Ann-category}, due to Quang \cite{Quang-1987}, and the notion of {\em categorical ring}, due to Jibladze and Pirashvili \cite{Jibladze-Pirashvili-2007}. The underlying data are the same in both cases, but the required axioms differ. In \cite{Quang-Hanh-Thuy-2008}, it is claimed that both definitions are equivalent, modulo an additional axiom that must be added to the Jibladze--Pirashvili definition. In this note, we clarify the relationship between these notions by showing how this additional axiom arises and why it must be imposed. Essential to this analysis is the equivalent description of a symmetric monoidal category given in \cite{Elgueta-2025}.
\end{abstract}

\section{Introduction}

By a 2-ring we mean a {\em groupoid} equipped with a structure analogous to that of a ring but with all axioms holding only up to natural isomorphisms, which must satisfy their own axioms. In particular, the underlying abelian group is replaced by a groupoid analog of an abelian group, usually a symmetric 2-group. 

Two concrete notions of 2-ring have been introduced in the literature: the notion of {\em Ann-category}, due to Quang \cite{Quang-1987}, and the notion of {\em categorical ring}, due to Jibladze and Pirashvili \cite{Jibladze-Pirashvili-2007}. The underlying data are the same in both cases but the required axioms differ. In \cite{Quang-Hanh-Thuy-2008}, it is shown that every Ann-category is a categorical ring, and that the converse holds if a suitable axiom is added to the Jibladze--Pirashvili definition. A few years later, Quang \cite{Quang-2013} provided a specific example of a categorical ring that is not an Ann-category, thereby demonstrating that the new axiom is indeed necessary. Despite this, some more recent papers still refer to the {\em apparently} missing axiom in the Jibladze--Pirashvili definition (e.g. \cite{Aldrovandi-2017}, p.~931), or cite the paper by Jibladze and Pirashvili as if there were no gap in their notion of categorical ring (e.g. \cite{Aldrovandi-Lester-2023}, p.~799).

The purpose of this note is to clarify why the Jibladze--Pirashvili definition is wrong by explaining how the additional axiom arises and why it must be imposed for the two definitions to be equivalent. Roughly, the key point is that symmetric 2-groups are equivalent to what we call AC 2-groups; see \cite{Elgueta-2025}. These differ from symmetric 2-groups in that the separate associator and commutator isomorphisms, satisfying the usual coherence axioms, are replaced by unified associo-commutators satisfying their own coherence axioms. Both structures are fully equivalent, in the sense that the corresponding 2-categories are isomorphic and have the same morphisms and 2-morphisms. However, while the description of morphisms in the 2-category of symmetric 2-groups can be simplified, no such simplification is possible in the 2-category of AC 2-groups. As a consequence, in the definition of an Ann-category some of the data ---and all axioms involving them--- become redundant, whereas this is not the case in the definition of a categorical ring, as implicitly assumed by Jibladze and Pirashvili. 

The outline of the paper is as follows. In Section~2 we recall the notions of (symmetric) monoidal category, (symmetric) monoidal functor and monoidal natural transformation, as well as the alternative equivalent notions of AC category, AC functor and AC natural transformation as defined in \cite{Elgueta-2025}. We also discuss the equivalent notions of symmetric 2-group and AC 2-group, and the corresponding notions of morphism and 2-morphism. In particular, the question of the data and axioms that describe the morphisms and 2-morphisms in each 2-category is discussed in detail. In Section~3 we recall the notions of 2-ring due to Quang and to Jibladze--Pirashvili, and we introduce the third notion of {\em AC 2-ring}, which is completely equivalent to Quang's notion, and from which Jibladze--Pirashvili notion is a non-trivial generalization.

We assume that the reader is already familiar with the (weak) notions of 2-category, 2-functor and 2-natural transformation (see \cite{Benabou-1967}, \cite{Borceux-1994-I}, \cite{Kelly-Street-1974}, \cite{MacLane-1998} or the more recent and comprehensive book \cite{Johnson-Yau-2021}). By contrast, the definitions of (symmetric) monoidal category, (symmetric) monoidal functor, and monoidal natural transformation are recalled because of their key role in what follows.

\section{Preliminaries}
\label{repas_cat_monoidals}

\subsection{Symmetric monoidal categories and AC categories}

The standard definition of symmetric monoidal category, due to MacLane \cite{MacLane-1963}, is as follows.

\begin{defn}\label{categoria_monoidal_simetrica}
A {\em symmetric monoidal category} is a 7-tuple $\CC=(\Cc,\oplus,0,a,c,l,r)$ consisting of:
\begin{itemize}
\item a category $\Cc$;
\item a functor $\oplus:\Cc\times\Cc\to\Cc$ (the {\em sum functor});
\item a distinguished object $0$ (the {\em identity object});
\item a family of natural isomorphisms $a_{x,y,z}:x\oplus(y\oplus z)\to(x\oplus y)\oplus z$ (the {\em associators});
\item a family of natural isomorphisms $c_{x,y}:x\oplus y\to y\oplus x$ (the {\em commutators});
\item two families of natural isomorphisms $l_x:0\oplus x\to x$ and $r_x:x\oplus 0\to x$ (the {\em left} and {\em right unitors}).
\end{itemize}
These data must satisfy the following axioms:
\begin{itemize}
\item[(SC1)] {\em (Pentagon axiom)} for every objects $x,y,z,t$ the following diagram commutes:
\[
\xymatrix{
x\oplus(y\oplus(z\oplus t))\ar[rr]^-{id\oplus a_{y,z,t}}\ar[dd]_{a_{x,y,z\oplus t}} && x\oplus((y\oplus z)\oplus t)\ar[d]^{a_{x,y\oplus z,t}} 
\\ && (x\oplus(y\oplus z)\oplus t\ar[d]^{a_{x,y,z}\oplus id}
\\ (x\oplus y)\oplus (z\oplus t)\ar[rr]_{a_{x\oplus y,z,t}} && ((x\oplus y)\oplus z)\oplus t
}
\]
\item[(SC2)] {\em (Triangle axiom)} for every objects $x,y$ the following  diagram commutes:
\[
\xymatrix{ x\oplus(0\oplus y)\ar[rr]^-{a_{x,0,y}}\ar[rd]_{id\oplus l_y} && (x\oplus 0)\oplus y\ar[ld]^{r_x\oplus id}
\\ & x\oplus y &
}
\]
\item[(SC3)] {\em (Hexagon axiom)}  for every objects $x,y,z$ the following diagram commutes:
\[
\xymatrix@R=1pc{
& (x\oplus y)\oplus z\ar[r]^-{c_{x\oplus y,z}} & z\oplus(x\oplus y)\ar[rd]^{a_{z,x,y}} & 
\\
x\oplus(y\oplus z)\ar[ru]^{a_{x,y,z}}\ar[rd]_{id\oplus c_{y,z}} & & & (z\oplus x)\oplus y
\\
& x\oplus(z\oplus y)\ar[r]_-{a_{x,z,y}} & (x\oplus z)\oplus y\ar[ru]_{c_{x,z}\oplus id} & 
} 
\]
\item[(SC4)] {\em (Symmetry condition)} for every objects $x,y$ in $\Cc$ we have $c_{y,x}\,c_{x,y}=id_{x\oplus y}$.
\end{itemize}
\end{defn}

\begin{rem}{\rm
The data $(\Cc,\oplus,0,a,l,r)$ satisfying axioms (SC1)-(SC2) is called a {\em monoidal category}. In any (symmetric) monoidal category $\CC$ the isomorphisms $l_0,r_0$ coincide (see \cite{Kelly-1964}). This isomorphism is denoted by $d:0\oplus 0\to 0$, and called the {\em basic unitor} of $\CC$. 
}
\end{rem}

As argued below, there is a notion equivalent to a symmetric monoidal category in which, instead of separate associator and commutator isomorphisms satisfying the above coherence axioms, we have "associo-commutator" isomorphisms satisfying
appropriate coherence laws. These structures are called {\em AC categories} in \cite{Elgueta-2025}.

\begin{defn} \label{AC-categoria}
An {\em AC category} is a 6-tuple $\CC=(\Cc,\oplus,0,b,l,r)$, where the data $\Cc,\oplus,0,l,r$ are as in Definition~\ref{categoria_monoidal_simetrica}, and $b$ is a familly of natural isomorphisms 
\[
b(x,y,z,t):(x\oplus y)\oplus(z\oplus t)\to(x\oplus z)\oplus(y\oplus t)
\]
(the {\em associo-commutators}). These data must satisfy the following axioms:
\begin{itemize}
\item[{\rm (AC1)}] {\em ($4\times 4$ axiom)} for every objects $x,y,z,t,x',y',z',t'$ in $\Cc$ the following diagram commutes:
{\small
\[
\xymatrix@C=4pc{
[(x\oplus y)\oplus(z\oplus t)]\oplus[(x'\oplus y')\oplus(z'\oplus t')]\ar[rr]^-{b(x\oplus y,z\oplus t,x'\oplus y',z'\oplus t')}\ar[d]_-{b(x,y,z,t)\oplus b(x',y',z',t')}
&&
[(x\oplus y)\oplus(x'\oplus y')]\oplus[(z\oplus t)\oplus(z'\oplus t')]\ar[d]^-{b(x,y,x',y')\oplus b(z,t,z',t')} 
\\
[(x\oplus z)\oplus(y\oplus t)]\oplus[(x'\oplus z')\oplus(y'\oplus t')]\ar[d]_-{b(x\oplus z,y\oplus t,x'\oplus z',y'\oplus t')} 
&&
[(x\oplus x')\oplus(y\oplus y')]\oplus[(z\oplus z')\oplus(t\oplus t')]\ar[d]^{b(x\oplus x',y\oplus y',z\oplus z',t\oplus t')}
\\
[(x\oplus z)\oplus(x'\oplus z')]\oplus[(y\oplus t)\oplus(y'\oplus t')]\ar[rr]_-{b(x,z,x',z')\oplus b(y,t,y',t')} && [(x\oplus x')\oplus(z\oplus z')]\oplus[(y\oplus y')\oplus(t\oplus t')]
}
\] }

\item[{\rm (AC2)}] {\em (unital axioms)} for every objects $x,y$ in $\Cc$ the following diagrams commute:
{\small
\begin{equation}\label{eq00}
\xymatrix{
(x\oplus 0)\oplus(y\oplus 0)\ar[rr]^-{b(x,0,y,0)}\ar[d]_-{r_x\oplus r_y} && (x\oplus y)\oplus(0\oplus 0)\ar[d]^-{id\oplus l_0}
\\
x\oplus y && (x\oplus y)\oplus 0\ar[ll]^-{r_{x\oplus y}} }
\quad 
\xymatrix{
(0\oplus x)\oplus(0\oplus y)\ar[rr]^-{b(0,x,0,y)}\ar[d]_-{l_x\oplus l_y} && (0\oplus 0)\oplus(x\oplus y)\ar[d]^-{r_0\oplus id}
\\
x\oplus y && 0\oplus(x\oplus y)\ar[ll]^-{l_{x\oplus y}} }
\end{equation}
\begin{equation}\label{eq0}
\xymatrix{
(x\oplus y)\oplus(0\oplus 0)\ar[rr]^-{b(x,y,0,0)}\ar[d]_-{id\oplus l_0} && (x\oplus 0)\oplus(y\oplus 0)\ar[d]^-{r_x\cdot r_y}
\\
(x\oplus y)\oplus 0\ar[rr]_-{r_{x\oplus y}} && x\oplus y }
\quad 
\xymatrix{
(0\oplus 0)\oplus(x\oplus y)\ar[rr]^-{b(0,0,x,y)}\ar[d]_-{r_0\oplus id} && (0\oplus x)\oplus(0\oplus y)\ar[d]^-{l_x\cdot l_y} 
\\
0\oplus(x\oplus y)\ar[rr]_-{l_{x\oplus y}} && x\oplus y }
\end{equation} }
\item[{\rm (AC3)}] {\em (normalization axiom)} for every objects $x,y$ in $\Cc$ we have
\begin{equation}\label{cond_simetria_b}
b(x,0,0,y)=id_{(x\oplus 0)\oplus(0\oplus y)}\,;
\end{equation}
\end{itemize}
\end{defn}

Each type of structure has its own notion of morphism. In both cases, the data is the same, but they differ on the required axioms. 

\begin{defn}\label{functor_monoidal_simetric}
Given two symmetric monoidal categories $\CC,\CC'$, a {\em symmetric monoidal functor} between them is a triple $\FF=(F,F_\oplus,F_0)$ consisting of:
\begin{itemize}
\item a functor $F:\Cc\to\Cc'$;
\item a family of natural isomorphism $F_\oplus(x,y):Fx\oplus' Fy\to F(x\oplus y)$ (the {\em monoidality isomorphisms});
\item an isomorphism $F_0:0'\to F0$ (the {\em zero isomorphism}). 
\end{itemize}
These data must satisfy the following axioms:
\begin{itemize}
\item[(SF1)] for every objects $x,y,z$ the following diagram commutes:
\[
\xymatrix@C=3pc{
Fx\oplus'(Fy\oplus' Fz)\ar[d]_{id\oplus' F_\oplus(y,z)}\ar[r]^-{a'} & (Fx\oplus' Fy)\oplus' Fz\ar[r]^-{F_\oplus(x,y)\oplus' id} & F(x\oplus y)\oplus' fz\ar[d]^{F_\oplus(x\oplus y,z)}
\\
Fx\oplus' F(y\oplus z)\ar[r]_-{F_\oplus(x,y\oplus z)} & F(x\oplus(y\oplus z))\ar[r]_-{Fa} & F((x\oplus y)\oplus z)
}
\]
\item[(SF2)] for every objects $x,y$ the following diagram commutes:
\[
\xymatrix{
Fx\oplus' Fy\ar[d]_{F_\oplus(x,y)}\ar[rr]^{c'_{Fx,Fy}} && Fy\oplus' Fx\ar[d]^{F_\oplus(y,x)}
\\
F(x\oplus y)\ar[rr]_{F(c_{x,y})} && F(y\oplus x)
}
\]

\item[(SF3)] for every object $x$ the following diagrams commute:
\[
\xymatrix{ Fx\oplus' 0'\ar[r]^-{id\oplus' F_0}\ar[d]_{r'_{Fx}} & Fx\oplus' F0\ar[d]^{F_\oplus(x,0)} \\ Fx & F(x\oplus 0)\ar[l]^{Fr_x} }\qquad 
\xymatrix{ 0'\oplus' Fx\ar[r]^-{F_0\oplus' id}\ar[d]_{l'_{Fx}} & F0\oplus' Fx\ar[d]^{F_\oplus(0,x)} \\ Fx & F(0\oplus x)\ar[l]^{Fl_x} }
\]
\end{itemize}
\end{defn}

\begin{rem}{\rm
If $\CC,\CC'$ are merely monoidal categories, with no commutators, a triple $(F,F_\oplus,F_0)$ as above satisfying (SF1) and (SF3) is called a {\em monoidal functor}. 
}
\end{rem}

\begin{defn}\label{AC-functor}
Given two AC categories $\CC,\CC'$, an {\em AC functor} is a triple $(F,F_\oplus,F_0)$ consisting of:
\begin{itemize}
\item a functor $F:\Cc\to\Cc'$;
\item a family of natural isomorphism $F_\oplus(x,y):Fx\oplus' Fy\to F(x\otimes y)$ (the {\em AC isomorphisms});
\item an isomorphism $F_0:0'\to F0$ (the {\em zero isomorphism}). 
\end{itemize}
These data must satisfy the following axioms:
\begin{itemize}
\item[{\rm (AF1)}] for every objects $x,y,z,t$ in $\Cc$ the following diagram commutes:
\[
\xymatrix{
(Fx\oplus' Fy)\oplus'(Fz\oplus' Ft)\ar[rrr]^-{b'(Fx,Fy,Fz,Ft)}\ar[d]_-{F_\oplus(x,y)\oplus' F_\oplus(z,t)}
&&&
(Fx\oplus' Fz)\oplus' (Fy\oplus' Ft)\ar[d]^-{F_\oplus(x,z)\oplus' F_\oplus(y,t)} 
\\
F(x\oplus y)\oplus' F(z\oplus t)\ar[d]_-{F_\oplus(x\oplus y,z\oplus t)}
&&&
F(x\oplus z)\oplus' F(y\oplus t)\ar[d]^-{F_\oplus(x\oplus z,y\oplus t)}
\\
F((x\oplus y)\oplus (z\oplus t))\ar[rrr]_-{Fb(x,y,z,t)} &&& F((x\oplus z)\oplus(y\oplus t)) }
\]
\item[{\rm (AF2)}] for ever object $x$ in $\Cc$ the following diagrams commute:
\[
\xymatrix{
0'\oplus' Fx\ar[r]^-{l'_{Fx}}\ar[d]_-{F_0\oplus' id} & Fx
\\
F0\oplus' Fx\ar[r]_-{F_\oplus(0,x)} & F(0\oplus x)\ar[u]_-{Fl_x} }
\quad
\xymatrix{
Fx\oplus' 0'\ar[r]^-{r'_{Fx}}\ar[d]_-{id\oplus' F_0} & Fx
\\
Fx\oplus' F0\ar[r]_-{F_\oplus(x,0)} & F(x\oplus 0)\ar[u]_-{Fr_x} }
\]
\end{itemize}
\end{defn}

Observe that axiom (SF3) in Definition~\ref{functor_monoidal_simetric} coincides with axiom (AF2) in Definition~\ref{AC-functor}. Thus, the two definitions differ only in that axioms (SF1)–(SF2) on $a,c$ are replaced by the single axiom (AF1) on $b$.

The composition of two symmetric monoidal functors $\FF:\CC\to\CC'$ and $\FF':\CC'\to\CC''$ is canonically a symmetric monoidal functor with structural isomorphisms given by
\begin{align}
(F'\circ F)_\oplus(x,y)&:=F'(F_\oplus(x,y))\circ F'_\oplus(Fx,Fy),\label{FcircF'_otimes}
\\ (F'\circ F)_0&:=F'(F_0)\circ F'_0,\label{FcircF'_1}
\end{align}
Similarly, the composition of two AC functors is canonically an AC functor with the same structural isomorphisms (see \cite{Elgueta-2025}, Proposition~2.6).

Finally, the 2-morphisms between two such morphisms are the same in both cases. Depending on the context, they are called one way or another. The precise definition is as follows.

\begin{defn}\label{trans_natural_monoidal}
If $\FF,\FF':\CC\to\CC'$ are two symmetric monoidal functors (resp. AC functors), with $\FF=(F,F_\oplus,F_0)$ and $\FF'=(F',F'_\oplus,F'_0)$, a {\em monoidal natural transformation} (resp. {\em AC natural transformation}) from $\FF$ to $\FF'$ is a natural transformation $\tau:F\Rightarrow F'$ that satisfies the following axioms:
\begin{itemize}
\item[(T1)] for every objects $x,y$ in $\Cc$ the following diagram commutes:
\[
\xymatrix{
Fx\oplus' Fy\ar[d]_{\tau_x\oplus'\tau_y}\ar[r]^-{F_\oplus(x,y)} & F(x\oplus y)\ar[d]^{\tau_{x\oplus y}}
\\
F'x\oplus' F'y\ar[r]_-{F'_\oplus(x,y)} & F'(x\oplus y)
}
\]
\item[(T2)] the following diagram commutes:
\[
\xymatrix{ & 0'\ar[ld]_{F_0}\ar[rd]^{F'_0} & \\ F0\ar[rr]_{\tau_0} && F'0 }
\]
\end{itemize}
\end{defn}

The symmetric monoidal categories, together with the symmetric monoidal functors and the monoidal natural transformations, form a 2-category $\mathbf{SMonCat}$, with the above composition of morphisms and the usual vertical and horizontal compositions of 2-morphisms. Similarly, the AC categories with the AC functors as morphisms and the AC natural transformations as 2-morphisms also form a 2-category $\mathbf{ACCat}$. The following result proves that both structures, symmetric monoidal categories and AC categories, are in fact equivalent.

\begin{thm} [\cite{Elgueta-2025}]\label{teorema_equivalencia}
$\mathbf{SMonCat}$ and $\mathbf{ACCat}$ are isomorphic 2-categories.
\end{thm}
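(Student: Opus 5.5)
To prove the isomorphism, I will build 2-functors $\Phi:\mathbf{SMonCat}\to\mathbf{ACCat}$ and $\Psi:\mathbf{ACCat}\to\mathbf{SMonCat}$. Both will be the identity on underlying categories, on $\oplus$, $0$, $l$, $r$, on the triples $(F,F_\oplus,F_0)$ and on the transformations $\tau$. Only the structural isomorphisms $a,c$ and $b$ are exchanged. I will then check that $\Psi\Phi$ and $\Phi\Psi$ are identities on the nose. Since 1-cells and 2-cells are literally unchanged and composition of morphisms uses the same formulas (\ref{FcircF'_otimes})--(\ref{FcircF'_1}) in both settings, functoriality of $\Phi,\Psi$ is automatic once objects and morphisms are shown to be mapped correctly. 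Axioms (T1)--(T2) do not mention $a,c,b$, so 2-cells need nothing.

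On objects, I set $\Phi(\CC)$ to have
\[
b(x,y,z,t)=a_{x\oplus z,y,t}\circ(a^{-1}_{x,z,y}\oplus id)\circ((id\oplus c_{y,z})\oplus id)\circ(a_{x,y,z}\oplus id)\circ a^{-1}_{x\oplus y,z,t},
\]
the canonical middle-four interchange. Conversely, given an AC category, I define
\[
c_{x,y}=(l_y\oplus r_x)\circ b(0,x,y,0)\circ(l_x^{-1}\oplus r_y^{-1}),
\]
using $0\oplus x\cong x$, $y\oplus 0\cong y$. Similarly,
\[
a_{x,y,z}=(r_{x\oplus y}\oplus id)\circ b(x,0,y,z)\circ(r_x^{-1}\oplus l_{y\oplus z}^{-1}),
\]
where the source $x\oplus(y\oplus z)$ is rewritten as $(x\oplus 0)\oplus(0\oplus(y\oplus z))$; the details are adjusted so that specializing $b$ at zeros yields the pure associator or commutator.

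The checks, in order, are as follows. (i) For $\Phi(\CC)$, axioms (AC1)--(AC3) follow from MacLane coherence for symmetric monoidal categories: both sides of each diagram are composites of instances of $a,c,l,r$ with the same underlying permutation, hence equal. (AC3) holds because $b(x,0,0,y)$ involves $c_{0,0}$, which equals $id$ by coherence. (ii) For $\Psi(\CC)$, I must derive (SC1)--(SC4) from (AC1)--(AC3). (iii) Both round trips must be identities. For $\Psi\Phi$ this is coherence again. For $\Phi\Psi$, I need to show that $b$ is recovered from the $a,c$ it defines, i.e.\ that $b$ is determined by its values with zeros inserted. I will do this by instantiating (AC1) with suitable variables equal to $0$ and cancelling unitors via (AC2), (AC3). (iv) On morphisms, (SF1)+(SF2) must be equivalent to (AF1), given (SF3)=(AF2). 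The direction (SF)$\Rightarrow$(AF1) is proved by pasting, since $b$ is a composite of $a$'s and $c$'s. For the converse, I specialize (AF1) at $y=0$ or $z=0$, $t=0$ and use (AF2) with naturality of unitors to obtain (SF1) and (SF2).

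I expect the main obstacle to be step (ii) together with the $\Phi\Psi$ part of (iii). Extracting the pentagon, hexagon and symmetry from the single $4\times4$ axiom requires choosing the right zero-substitutions in (AC1) and a long diagram chase with the unital axioms. For the symmetry (SC4), I would substitute $x=t'=0$ and similar choices in (AC1), together with (AC3), so that two consecutive $b$'s compose to $c_{y,x}c_{x,y}$ on one side and to an identity on the other. For the pentagon and hexagon, I would substitute zeros in (AC1) so that the eight variables collapse to three or four, and read off the required hexagon and pentagon after cancelling unitors. If this becomes unwieldy, I would fall back on a strictification argument: replace an AC category by an equivalent one with strict unit, in which these substitutions become transparent.
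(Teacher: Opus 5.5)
Your proposal is correct in outline and follows the same route as the paper: the same constructions exchanging $(a,c)$ and $b$ (the paper's (\ref{associocommutador}), (\ref{associador_b}), (\ref{commutador_b})), 2-functors that are the identity on 1- and 2-cells, coherence for the passage symmetric monoidal $\Rightarrow$ AC on morphisms, and (AF1) specialized at zeros together with (AF2) for the converse (the paper's diagrams (\ref{diagrama1}) and (\ref{diagrama2})), with the object-level axioms and the round trips only outlined, as the paper likewise defers them to \cite{Elgueta-2025}. Two slips need fixing: your associator formula does not typecheck and should read $a_{x,y,z}=(id\oplus l_z)\circ b(x,0,y,z)\circ(r_x^{-1}\oplus id)$ (your formula for $b$ also uses $a$ in the direction opposite to this convention), and (SF2) comes from specializing (AF1) at the first and last variables, matching $b(0,x,y,0)$, rather than at $z=t=0$, where $b(x,y,0,0)$ is just a unitor composite by (AC2).
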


The passage from one type of category to the other is as follows. Given a symmetric monoidal category $\CC=(\Cc,\oplus,0,a,c,l,r)$, we have an AC category $\CC_{ac}$ whose data $\Cc,\oplus,0,l,r$ are the same, and whose associo-commutator $b(x,y,z,t)$ is given by the unique isomorphism defined by the commutative diagram
\begin{equation}\label{associocommutador}
\xymatrix@C=3pc{
x\oplus(y\oplus(z\oplus t))\ar[r]^-{id\oplus a_{y,z,t}} 
&
x\oplus((y\oplus z)\oplus t)\ar[r]^-{id\oplus(c_{y,z}\cdot id)}\ar[dd]|-{a_{x,yz,t}} 
&
x\oplus((z\oplus y)\oplus t)\ar[r]^-{id\oplus a^{-1}_{z,y,t}}\ar[dd]|-{a_{x,zy,t}}
&
x\oplus(z\oplus(y\oplus t))\ar[d]|-{a_{x,z,yt}}
\\
(x\oplus y)\oplus(z\oplus t)\ar[u]|-{a^{-1}_{x,y,zt}}\ar[d]|-{a_{xy,z,t}}
&&&
(x\oplus z)\oplus(y\oplus t)
\\
((x\oplus y)\oplus z)\oplus t\ar[r]_-{a^{-1}_{x,y,z}\oplus id}
& 
(x\oplus(y\oplus z))\oplus t\ar[r]_-{(id\oplus c_{y,z})\oplus id}
&
(x\oplus(z\oplus y))\oplus t\ar[r]_-{a_{x,z,y}\oplus id}
&
((x\oplus z)\oplus y)\oplus t\ar[u]|-{a^{-1}_{xz,y,t}} 
}
\end{equation}
We shall refer to these isomorphisms as the {\em canonical associo-commutators} of $\CC$. Conversely, given an AC category $\DD=(\Dd,\oplus,0,b,l,r)$ we have a symmetric monoidal category $\DD_{sm}$ with the same data $\Dd,\oplus,0,l,r$, and the associator $a_{x,y,z}$ and commutator $c_{x,y}$ respectively given by the composite isomorphisms
\begin{equation}\label{associador_b}
\xymatrix@C=3pc{
x\oplus(y\oplus z)\ar[r]^-{r_x^{-1}\oplus id} & (x\oplus 0)\oplus(y\oplus z)\ar[r]^-{b(x,0,y,z)} & (x\oplus y)\oplus(0\oplus z)\ar[r]^-{id\oplus l_z} & (x\oplus y)\oplus z }
\end{equation}
\begin{equation}\label{commutador_b}
\xymatrix@C=3pc{
x\oplus y\ar[r]^-{l_x^{-1}\oplus r_y^{-1}} & (0\oplus x)\oplus(y\oplus 0)\ar[r]^-{b(0,x,y,0)} & (0\oplus y)\oplus(x\oplus 0)\ar[r]^-{l_y\oplus r_x} & y\oplus x }
\end{equation}
We shall refer to the isomorphisms (\ref{associador_b}) and (\ref{commutador_b}) as the {\em canonical associators} and {\em canonical commutators}, respectively, of $\DD$. 

It is shown that the assignments $\CC\mapsto\CC_{ac}$ and $\DD\mapsto\DD_{sm}$ extend to mutually inverse 2-functors
\begin{align*}
&(-)_{ac}:\mathbf{SMonCat}\to\mathbf{ACCat},
\\
&(-)_{sm}:\mathbf{ACCat}\to\mathbf{SMonCat}
\end{align*}
acting as the identity on both morphisms and 2-morphisms. Indeed, if $\FF=(F,F_\oplus,F_0)$ is a symmetric monoidal functor between symmetric monoidal categories $\CC$ and $\CC'$, the same triple $(F,F_\oplus,F_0)$ is an AC functor between the corresponding AC categories $\CC_{ac}$ and $\CC'_{ac}$, and conversely, every AC functor $\FF=(F,F_\oplus,F_0)$ between AC categories $\DD$ and $\DD'$ is also a symmetric monoidal functor between the corresponding symmetric monoidal categories $\DD_{sm}$ and $\DD'_{sm}$. The first assertion is an immediate consequence of the coherence theorem for symmetric monoidal functors (see Theorem~1.3.12 in \cite{Yau-2024-I}) and the definition of the associo-commutator in $\CC_{ac}$. As for the second assertion, whose proof details are omitted in \cite{Elgueta-2025}, it follows from the two commutative diagrams
\begin{equation}\label{diagrama1}
\xymatrix@R=3.5pc{
Fx\oplus'(Fy\oplus' Fz)\ar[r]^-{(r')^{-1}_{Fx}\oplus' id} \ar[dd]|{id\,\oplus' F_\oplus(y,z)}
&
(Fx\oplus' 0')\oplus'(Fy\oplus' Fz)\ar[r]^-{b'(Fx,0',Fy,Fz)}\ar[d]|{(id\,\oplus' F_0)\oplus' id}\ar@{}@<-5ex>[r]|{\textcircled{N}}
&
(Fx\oplus' Fy)\oplus'(0'\oplus' Fz)\ar[r]^-{id\,\oplus' l'_{Fz}}\ar[d]|-{id\oplus'(F_0\oplus' id)}
&
(Fx\oplus' Fy)\oplus' Fz\ar[dd]|-{F_\oplus(x,y)\oplus' id}
\\
&
(Fx\oplus' F0)\oplus'(Fy\oplus' Fz)\ar[r]_-{b'(Fx,F0,Fy,Fz)}\ar[d]|-{F_\oplus(x,0)\oplus' F_\oplus(y,z)}\ar@{}@<+5ex>[l]|{\textcircled{2}}
&
(Fx\oplus' Fy)\oplus'(F0\oplus' Fz)\ar[d]|-{F_\oplus(x,y)\oplus' F_\oplus(0,z)}\ar@{}@<-5ex>[r]|{\textcircled{2}}
&
\\
Fx\oplus' F(y\oplus z)\ar[r]^-{Fr^{-1}_x\oplus id}\ar[d]|-{F_\oplus(x,y\oplus z)}\ar@{}@<-5ex>[r]|{\textcircled{N}}
& 
F(x\oplus 0)\oplus' F(y\oplus z)\ar[d]|-{F_\oplus(x\oplus 0,y\oplus z)}\ar@{}[r]|{\textcircled{1}}
&
F(x\oplus y)\oplus' F(0\oplus z)\ar[r]^-{id\oplus Fl_z}\ar[d]|-{F_\oplus(x\oplus y,0\oplus z)}\ar@{}@<-5ex>[r]|{\textcircled{N}}
&
F(x\oplus y)\oplus' Fz\ar[d]|-{F_\oplus(x\oplus y,z)}
\\
F(x\oplus(y\oplus z))\ar[r]_-{F(r^{-1}_x\oplus id)}
&
F((x\oplus 0)\oplus(y\oplus z))\ar[r]_-{Fb(x,0,y,z)} 
&
F((x\oplus y)\oplus(0\oplus z))\ar[r]_-{F(id\oplus l_z)}
&
F((x\oplus y)\oplus z)
}
\end{equation}
and 
\begin{equation}\label{diagrama2}
\xymatrix@R=3.5pc@C=3pc{
Fx\oplus' Fy\ar[r]^-{(l')^{-1}_{Fx}\oplus'(r')^{-1}_{Fy}}\ar[ddd]_-{F_\oplus(x,y)}\ar@{}@<-5ex>[r]|{\textcircled{2}}
& 
(0'\oplus' Fx)\oplus'(Fy\oplus' 0')\ar[r]^-{b'(0',Fx,Fy,0')}\ar[d]|{(F_0\oplus' id)\oplus'(id\oplus' F_0)}
\ar@{}@<-5ex>[r]|{\textcircled{N}}
& 
(0'\oplus' Fy)\oplus'(Fx\oplus' 0')\ar[d]|{(F_0\oplus' id)\oplus'(id\oplus' F_0)}\ar[r]^-{l'_{Fy}\oplus' r'_{Fx}}
\ar@{}@<-5ex>[r]|{\textcircled{2}}
&
Fy\oplus' Fx\ar[ddd]^-{F_\oplus(y,x)}
\\
&
(F0\oplus' Fx)\oplus'(Fy\oplus' F0)\ar[r]_-{b'(F0,Fx,Fy,F0)}\ar[d]|{F_\oplus(0,x)\oplus'F_\oplus(y,0)}
& 
(F0\oplus' Fy)\oplus'(Fx\oplus' F0)\ar[d]|{F_\oplus(0,y)\oplus'F_\oplus(x,0)}
& 
\\
&
F(0\oplus x)\oplus'F(y\oplus 0)\ar[d]|{F_\oplus(0\oplus x,y\oplus 0)}\ar@/^1pc/[luu]|-{Fl_x\oplus' Fr_y}\ar@{}[l]|{\textcircled{N}}\ar@{}[r]|{\textcircled{1}}
& 
F(0\oplus y)\oplus'F(x\oplus 0)\ar[d]|{F_\oplus(0\oplus y,x\oplus 0)}\ar@/_1pc/[ruu]|-{\ \ Fl_x\oplus' Fr_y}\ar@{}[r]|{\textcircled{N}}
& 
\\
F(x\oplus y)\ar[r]_-{F(l^{-1}_x\oplus r^{-1}_y)}
&
F((0\oplus x)\oplus(y\oplus 0))\ar[r]_-{Fb(0,x,y,0)}
&
F((0\oplus y)\oplus (x\oplus 0))\ar[r]_-{F(l_y\oplus r_x)}
&
F(y\oplus x)
}
\end{equation}
In these diagrams, the subdiagrams labelled $\textcircled{N}$ are naturality squares, those labelled $\textcircled{1}$ commute because of axiom (AF1), and those labelled $\textcircled{2}$ because of axiom (AF2) (and the functoriality of $\oplus'$). Hence in both cases the outer diagrams commute, proving axioms (SF1) and (SF2). Axiom (SF3) follows directly from (AF2). The proof that the 2-functors so defined are mutually inverse can be found in \cite{Elgueta-2025}.

\subsection{Symmetric 2-groups and AC 2-groups}

For our purposes, the point of Theorem~\ref{teorema_equivalencia} is that it restricts to an isomorphism of 2-categories
\begin{equation}\label{iso_2SGrp-2ACGrp}
\xymatrix{
\mathbf{2SGrp}\ar@<+1.1ex>[r]^-{(-)_{ac}}_-{\cong}
& 
\mathbf{2ACGrp}\ar@<+1.1ex>[l]^-{(-)_{sm}} }
\end{equation}
where $\mathbf{2SGrp}$ is the full sub-2-category of $\mathbf{SMonCat}$ whose objects are the {\em symmetric 2-groups}, i.e. the symmetric monoidal categories whose underlying category is a groupoid, and such that every object $x$ has a (generically weak) inverse $\overline x$ with respect to $\oplus$, and $\mathbf{2ACGrp}$  is the full sub-2-category of $\mathbf{ACCat}$ whose objects are the {\em AC 2-groups}, i.e. the AC categories whose underlying category is a groupoid, and such that every object $x$ has a (generically weak) inverse $\overline x$ with respect to $\oplus$. Indeed, both 2-functors $(-)_{ac}$ and $(-)_{sm}$ preserve the data $\Cc,\oplus,0,l,r$. Hence every symmetric 2-group is mapped by $(-)_{ac}$ to an AC 2-group, and conversely, every AC 2-group is mapped by $(-)_{sm}$ to a symmetric 2-group. In this sense, both structures can be regarded as groupoid analogues of abelian groups, as they constitute different descriptions of essentially the same structure.

For what follows, we will need two facts about the 2-category $\mathbf{2SGrp}$ (and the corresponding facts for $\mathbf{2ACGrp}$): the existence of internal homs and the simplified description of morphisms and 2-morphisms.

\subsubsection{Internal homs}
The 2-category $\mathbf{2SGrp}$ has internal homs (see \cite{Dupont-2008}). In fact, we only need the existence of a canonical sum functor on the hom-groupoids. Indeed, if $\underline{Hom}(\GG,\GG')$ denotes the groupoid of morphisms between two symmetric 2-groups $\GG,\GG'$ and the 2-morphisms between them, we have a functor 
\[
\boxplus:\underline{Hom}(\GG,\GG')\times\underline{Hom}(\GG,\GG')\to\underline{Hom}(\GG,\GG')
\]
defined pointwise. More precisely, if $\FF,\FF':\GG\to\GG'$, with $\FF=(F,F_\oplus,F_0)$ and $\FF'=(F',F_\oplus',F'_0)$ , then $\FF\boxplus\FF'$ is the morphism given by the triple $(F\boxplus F',(F\boxplus F')_\oplus,(F\boxplus F')_0)$, where $F\boxplus F'$ is the functor acting on objects $x$ and morphisms $f$ by
\begin{align}
(F\boxplus F')x=Fx\oplus F'x,\label{F_oplus_F'_objectes}
\\
(F\boxplus F')f=Ff\oplus F'f,\label{F_oplus_F'_morfismes}
\end{align}
$(F\boxplus F')_\oplus(x,y)$ the natural isomorphism given by
\begin{equation}\label{varphi_oplus_varphi'}
\xymatrix@C=4.5pc{
(Fx\oplus F'x)\oplus(Fx\oplus F'y)\ar[r]^-{b(Fx,F'x,Fy,F'y)} & (Fx\oplus Fy)\oplus(F'x\oplus F'y)\ar[r]^-{F_\oplus(x,y)\oplus F'_\oplus(x,y)} & F(x\oplus y)\oplus F'(x\oplus y)
}
\end{equation}
for any objects $x,y$, with $b(a,b,c,d):(a\oplus b)\oplus(c\oplus d)\to(a\oplus c)\oplus(b\oplus d)$ the canonical isomorphism given by (\ref{associocommutador}), and $(F\boxplus F')_0$ is the isomorphism
\begin{equation}\label{F_boxplus_F'_0}
\xymatrix{
0'\ar[r]^-{d'} & 0'\oplus 0'\ar[r]^-{F_0\oplus F'_0} & F0\oplus F'0.
}
\end{equation}
The same fact remains valid for AC 2-groups. Indeed, if $\GG,\GG'$ are any two AC 2-groups, the groupoids $\underline{Hom}(\GG,\GG')$ and $\underline{Hom}(\GG_{sm},\GG'_{sm})$ coincide because $(-)_{sm}$ acts as the identity on morphisms and 2-morphisms. Now, this groupoid has a symmetric 2-group structure and hence, also an AC 2-group structure given by the corresponding canonical associo-commutator.

\subsubsection{On the description of morphisms and 2-morphisms}
\label{descripcio_morfismes}
The description of morphisms and 2-morphisms in $\mathbf{2SGrp}$ can be simplified. In fact, the simplification already occurs in the 2-category $\mathbf{2Grp}$ of {\em 2-groups}, i.e. monoidal categories whose underlying category is a groupoid, and such that every object $x$ has a (generically weak) inverse $\overline x$ with respect to $\oplus$. The morphisms in this 2-category are the monoidal functors, and the 2-morphisms the monoidal natural transformations. In this more general setting, the result can be stated as follows.

\begin{prop}\label{expressio_F_1}
Let $\GG,\GG'$ be any 2-groups. Then the following holds:
\begin{itemize}
\item[(1)] for any pair $(F,F_\oplus)$ as above satisfying (SF1) there exists one and only one isomorphism $F_0:0'\to F0$ such that $\FF=(F,F_\oplus,F_0)$ is a morphism of 2-groups, and it is given by the composite

\begin{equation}\label{expressio_varepsilon_bis}
\hspace{1.5truecm}\xymatrix@R=1.7pc@C=2.7pc{
0'\ar[r]^-{\eta} & \overline{F0}\oplus' F0\ar[r]^-{id\oplus' F(d^{-1})} &  \overline{F0}\oplus' F(0\oplus 0)\ar[d]^-{id\oplus' F_\oplus(0,0)^{-1}} & &&& 
\\
&& \overline{F0}\oplus' (F0\oplus' F0)\ar[d]^-{a'_{\overline{F0},F0,F0}} &&&
\\
&& (\overline{F0}\oplus' F0)\oplus' F0\ar[r]_-{\eta^{-1}\oplus' id} & 0'\oplus' F0\ar[r]_-{l'_{F0}} & F0
}
\end{equation}
for any inverse $\overline{F0}$ of $F0$ and any isomorphism $\eta:0'\to\overline{F0}\oplus F0$;
\item[(2)] if $\FF'=(F',F'_\oplus,F'_0):\GG\to\GG'$ is any other morphism of 2-groups and $\tau:F\Rightarrow F'$ is a natural transformation satisfying (T1) then it also satisfies (T2).
\end{itemize}
\end{prop}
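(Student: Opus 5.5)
Both parts rest on one fact: in a 2-group, if $u$ is an object and $u\oplus' -$ (or $-\oplus' u$) is an equivalence of groupoids, then it is faithful on morphisms. Such an equivalence exists because $u$ has a weak inverse. So any two morphisms $f,g$ with $id_u\oplus' f=id_u\oplus' g$ (or $f\oplus' id_u = g\oplus' id_u$) are equal.

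For part (1) I first prove \emph{uniqueness}. Suppose $F_0$ makes $(F,F_\oplus,F_0)$ satisfy (SF3). Apply the right-unit square of (SF3) with $x=0$:
\[
r'_{F0}=F(r_0)\circ F_\oplus(0,0)\circ(id\oplus' F_0),
\]
with $r_0=d$. This gives
\[
id_{F0}\oplus' F_0=F_\oplus(0,0)^{-1}\circ F(d^{-1})\circ r'_{F0}
\]
as maps $F0\oplus' 0'\to F0\oplus' F0$, after transporting along $r'_{F0}$. By the cancellation fact, $F_0$ is determined by $F_\oplus$ and $F$. It remains to solve this equation explicitly. Tensor on the left with $\overline{F0}$ and use $\eta$, the associator $a'$, $l'$ and the triangle axiom (SC2) to move $\overline{F0}\oplus'(F0\oplus' -)$ to $0'\oplus' -$ and then to the identity. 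A short computation, using naturality of $a'$ and $l'$ and the standard unit identity $l'_{0'\oplus' y}$ versus $id\oplus' l'_y$, turns the solved form into exactly the composite (\ref{expressio_varepsilon_bis}). This also shows the composite is independent of the choice of $\overline{F0}$ and $\eta$.

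Next comes \emph{existence}. Define $F_0$ by (\ref{expressio_varepsilon_bis}) and verify both diagrams of (SF3). I would first verify the special case $x=0$ of the right square, which is essentially the computation above read backwards. Then I would bootstrap to general $x$. Tensor the desired identity with $id_{F0}$ on the appropriate side and use (SF1) for the triples $(x,0,0)$ and $(0,0,x)$. Together with naturality of $F_\oplus$, the triangle axiom in $\GG$ and in $\GG'$, and the $x=0$ case, this expresses both sides of the general square through the special one. Cancelling $F0$ then finishes. The left square is handled symmetrically; its $x=0$ case follows from the right one because $l_0=r_0=d$ and $l'_{0'}=r'_{0'}$, via a similar cancellation argument.

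I expect this existence step to be the main obstacle. Reducing general $x$ to $x=0$ needs a careful diagram chase involving (SF1) with a unit entry and the Kelly identities $a_{x,y,0}$ versus $r$ and $a_{0,x,y}$ versus $l$. I would organise it by proving that both $\phi_x:=Fr_x\circ F_\oplus(x,0)\circ(id\oplus' F_0)$ and $r'_{Fx}$ are natural in $x$. I would then show that, after tensoring with $Fy$ on the left, both become compatible with (SF1). The equality $\phi=r'$ then reduces to its value at $x=0$.

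For part (2), write $\theta:=\tau_0\circ F_0$ and compare it with $F'_0$. Apply (T1) with $x=y=0$, precompose with $id\oplus' F_0$ and $id\oplus' F'_0$, and use the (SF3) squares for $\FF$ and $\FF'$ at $x=0$ together with naturality of $\tau$ along $d$. This yields
\[
\tau_0\oplus' F'_0=\tau_0\oplus'\theta
\]
as maps $F0\oplus' 0'\to F'0\oplus' F'0$. Both sides have the form $\tau_0\oplus' g$, so precomposing with $\tau_0^{-1}\oplus' id$ gives $id\oplus' F'_0=id\oplus'\theta$. Cancellation then gives $F'_0=\tau_0F_0$, which is (T2).
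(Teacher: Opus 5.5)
Your proposal is correct and follows essentially the same route as the paper's proof in Appendix~A: uniqueness by cancelling with the equivalence $F0\oplus'-$, existence by reducing (SF3) at arbitrary $x$ to the unit case via (SF1) with two unit entries, naturality and the triangle/Kelly identities, the explicit formula obtained by tensoring with $\overline{F0}$, and part (2) by combining (T1), (SF3) for $\FF$ and $\FF'$, naturality of $\tau$ and cancellation. The differences are cosmetic. You work at $x=0$ in part (2), and you first derive the left unit square at $0$ from the right one via $l'_{0'}=r'_{0'}$; the paper instead obtains the left square for all $x$ directly from the right one at the unit, using (SF1) at $(0,0,x)$ and the triangle axiom. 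One point needs more care: turning your derivation of the formula into existence at $x=0$ is not a literal reversal of the computation. It uses fullness of $F0\oplus'-$, or alternatively the paper's direct check after tensoring with $\overline{F0}$.
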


Although the result is well known, a  proof is included in Appendix~A because of its importance in what follows; a less direct proof can be found in \cite{Kock-2008}.

\begin{cor}\label{expressio_F1_bis}
Let $\GG,\GG'$ be any symmetric 2-groups. Then the following holds:
\begin{itemize}
\item[(1)] for any pair $(F,F_\oplus)$ as above satisfying (SF1) and (SF2) there exists one and only one isomorphism $F_0:0'\to F0$ such that $\FF=(F,F_\oplus,F_0)$ is a morphism of symmetric 2-groups, and it is given by (\ref{expressio_varepsilon_bis});
\item[(2)] if $\FF'=(F',F'_\oplus,F'_0):\GG\to\GG'$ is any other morphism of symmetric 2-groups and $\tau:F\Rightarrow F'$ is a natural transformation satisfying (T1) then it also satisfies (T2).
\end{itemize}
\end{cor}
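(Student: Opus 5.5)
The corollary should follow directly from Proposition~\ref{expressio_F_1}, applied to the underlying 2-groups. A symmetric 2-group $\GG=(\Cc,\oplus,0,a,c,l,r)$ has an underlying 2-group $(\Cc,\oplus,0,a,l,r)$, obtained by forgetting $c$. A morphism of symmetric 2-groups is exactly a morphism of the underlying 2-groups that also satisfies (SF2). The 2-morphisms are the same in both settings: natural transformations satisfying (T1) and (T2), with no condition involving the commutators.

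For part (1), let $(F,F_\oplus)$ satisfy (SF1) and (SF2). Since it satisfies (SF1), Proposition~\ref{expressio_F_1}(1) applied to the underlying 2-groups gives an isomorphism $F_0$, namely the composite (\ref{expressio_varepsilon_bis}), such that $(F,F_\oplus,F_0)$ satisfies (SF1) and (SF3). Because (SF2) does not involve $F_0$ and is assumed, $(F,F_\oplus,F_0)$ satisfies (SF1)--(SF3), so it is a symmetric monoidal functor.

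For uniqueness, suppose $F_0'$ is another isomorphism making $(F,F_\oplus,F_0')$ a morphism of symmetric 2-groups. Then $(F,F_\oplus,F_0')$ is in particular a morphism of the underlying 2-groups. The uniqueness clause of Proposition~\ref{expressio_F_1}(1) then forces $F_0'$ to equal (\ref{expressio_varepsilon_bis}). The formula in (\ref{expressio_varepsilon_bis}) uses only $a'$, $l'$, $d$ and inverse data, so it is the same expression in the symmetric setting.

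For part (2), let $\FF,\FF'$ be morphisms of symmetric 2-groups. They are in particular morphisms of the underlying 2-groups, and $\tau$ satisfies (T1). Proposition~\ref{expressio_F_1}(2) then gives (T2) directly.

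There is no real obstacle here. The only point to check is that passing to underlying 2-groups preserves the hypotheses and conclusions, and this holds because (SF2) is an extra condition independent of $F_0$, while (T1) and (T2) make no reference to commutators.
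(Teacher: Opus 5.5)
Your proposal is correct and follows the paper's proof. The paper simply notes that (SF2) does not involve $F_0$ and applies Proposition~\ref{expressio_F_1} to the underlying 2-groups; you spell out the existence, uniqueness and (T2) steps of that same reduction explicitly.
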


\begin{proof}
The additional axiom (SF2) in the definition of a morphism of {\em symmetric} 2-groups does not involve the isomorphism $F_0$. 
\end{proof}

According to Corollary~\ref{expressio_F1_bis}, a morphism between symmetric 2-groups is completely given by a pair $(F,F_\oplus)$ as above satisfying (SF1) and (SF2), the isomorphism $F_0$ being uniquely determined by the rest of the structure, and a 2-morphism between two such morphisms $(F,F_\oplus)$ and $(F',F'_\oplus)$ merely consists of a natural transformation $\tau:F\Rightarrow F'$ that satisfies (T1), axiom (T2) being automatic. 

Contrary to what one might expect at first sight, the same fact is only partially true in the 2-category of AC 2-groups. Thus, statement (2) remains true in the 2-category $\mathbf{2ACGrp}$. In fact, an AC natural transformation between two AC functors $\FF,\FF':\GG\to\GG'$ is a monoidal natural transformation between the symmetric monoidal functors $\FF,\FF':\GG_{sm}\to\GG'_{sm}$ and hence, it is completely given by a natural transformation satisfying (T1), axiom (T2) being a consequence. 

However, {\em the zero isomorphism $F_0$ in an AC functor $\FF=(F,F_\oplus,F_0)$ between two AC 2-groups $\GG,\GG'$ is not determined by the pair $(F,F_\oplus)$ satisfying axiom (AF1)}. In fact, given a pair $(F,F_\oplus)$ satisfying (AF1), the most natural approach to prove the existence of a unique $F_0:0'\to F0$ satisfying (AF2) would be to consider the corresponding symmetric 2-groups $\GG_{sm},\GG'_{sm}$ and the morphism between them defined by the same pair $(F,F_\oplus)$. By Corollary~\ref{expressio_F1_bis}, one would then conclude the existence of a unique $F_0$ satisfying (SF3) and, consequently, (AF2). The flaw in this argument is that {\em axiom (AF1) on $(F,F_\oplus)$ for the given $b,b'$ is not equivalent to axioms (SF1)-(SF2) on $(F,F_\oplus)$ for the associated pairs $(a,c),(a',c')$}. Indeed, any pair $(F,F_\oplus)$ satisfying (SF1)-(SF2) for the canonical pairs $(a,c),(a',c')$ given by (\ref{associador_b}) and (\ref{commutador_b}) also satisfies (AF1). However, as the next example shows,  $(F,F_\oplus)$ may satisfy (AF1) for the given $b,b'$ without satisfying (SF1)-(SF2) for the corresponding pairs $(a,c)$ and $(a',c')$.

\begin{ex}  {\rm (cf. \cite[Appendix]{Quang-2013}) 
Let $\Rsf$ be the ring $\ZZ[x]/\langle x^2\rangle$ of dual integers, whose elements are of the form $a+b\epsilon$, with $a,b\in\ZZ$ and $\epsilon^2=0$, and whose sum and product are given by 
\begin{align*}
(a+b\epsilon)+(a'+b'\epsilon)&=(a+a')+(b+b')\epsilon,
\\
(a+b\epsilon)\cdot(a'+b'\epsilon)&=aa'+(ab'+a'b)\epsilon.
\end{align*} 
Then an AC 2-group $\GG(\Rsf,\ZZ)$ is given as follows. The underlying groupoid $\Gg(\Rsf,\ZZ)$ has the elements in $R$ as objects, the pairs in $\ZZ\times R$ as morphisms, with 
\[
(n,a+b\epsilon):a+b\epsilon\to a+b\epsilon,
\]
and composition is given by the sum in $\ZZ$. The identity morphisms are given by the pairs $(0,a+b\epsilon)$. The functor $\oplus$ is given by the sum in $\Rsf$ on objects, and the componentwise sum in $\ZZ\times\Rsf$ on morphisms. In particular, it is strictly associative and commutative, it has the element $0\in R$ as a strict identity object, and all inverses are strict. Hence $\GG(\Rsf,\ZZ)=(\Gg(\Rsf,\ZZ),\oplus,0,1,1,1)$ is a totally strict AC 2-group. Then, for any $a,b\in\ZZ$ let $F(a,b):\Gg(\Rsf,\ZZ)\to\Gg(\Rsf,\ZZ)$ be the functor acting on objects and morphisms by
\begin{align*}
F(a,b)(x+y\epsilon)&:=(a+b\epsilon)\cdot(x+y\epsilon)=ax+(ay+bx)\epsilon,
\\
F(a,b)(n,x+y\epsilon)&:=(an,ax+(ay+bx)\epsilon)
\end{align*}
(functoriality follows from the distributivity in $\ZZ$), and
\[
F(a.b)_\oplus(x+y\epsilon,x'+y'\epsilon):F(a,b)(x+y\epsilon)\oplus F(a,b)(x'+y'\epsilon)\to F(a,b)((x+y\epsilon)\oplus(x'+y'\epsilon))
\]
be the morphism given by
\[
F(a,b)_\oplus(x+y\epsilon,x'+y'\epsilon):=(b(x+x'),a(x+x')+[a(y+y')+b(x+x')]\epsilon).
\]
It is easy to check that these isomorphisms are natural in $x+y\epsilon,x'+y'\epsilon$ and that $(F(a,b),F(a,b)_\oplus)$ satisfies (AF1). Precisely, this condition amounts to the equality
\[
b[(x+x')+(x''+x''')]=b(x+x')+b(x''+x''')
\]
for every $x,x',x'',x''\in\ZZ$. However, although $(F(a,b),F(a,b)_\oplus)$ satisfies (SF2), it does not satisfy (SF1) if $b\neq 0$. Specifically, the reader may easily check that (SF1) amounts to the condition
\[
b(x-x')=0
\]
for every $x,x'\in\ZZ$. }
\end{ex}

The problem becomes clear upon examination of diagrams (\ref{diagrama1}) and (\ref{diagrama2}). It follows from (\ref{diagrama1}) that, for a pair $(F,F_\oplus)$ satisfying (AF1) to also satisfy (SF1), we explicitly require the existence of the zero isomorphism $F_0$ satisfying (AF2), and similarly for $(F,F_\oplus)$ to satisfy (SF2), as it follows from (\ref{diagrama2}). Hence, in the above argument, the existence of the isomorphism $F_0$ satisfying (AF2) is proved by assuming it beforehand.

In fact, this is not a proof that $F_0$ is not determined by the pair $(F,F_\oplus)$ satisfying axiom (AF1). It merely rules out the argument of passing to the setting of symmetric 2-groups and using that the result holds in this setting. However, the previous example also shows that $F_0$ may certainly fail to exist for some pairs $(F,F_\oplus)$ satisfying axiom (AF1). Indeed, since the unitors in $\GG(\Rsf,\ZZ)$ are identities, axiom (AF2) for the above pair $(F(a,b),F(a,b)_\oplus)$ reduces to the equalities
\begin{align*}
F_0\oplus id_{F(a,b)(x+y\epsilon)}&:=F(a,b)_\oplus(0,x+y\epsilon)^{-1},
\\
id_{F(a,b)(x+y\epsilon)}\oplus F_0&:=F(a,b)_\oplus(x+y\epsilon,0)^{-1}
\end{align*}
for every object $x+y\epsilon$. Hence an isomorphism $F_0$ satisfying (AF2) for this pair corresponds to an integer $n$ such that
\[
n=-bx
\]
for every $x\in\ZZ$, and such an integer does not exist if $b\neq 0$.

\section{Notions of 2-ring}

As stated in the introduction, by a 2-ring we mean a groupoid equipped with a structure similar to that of a ring, but with all axioms holding only up to natural isomorphisms satisfying their own axioms. In particular, the underlying abelian group is replaced by a symmetric 2-group or an AC 2-group. Depending on this choice, and on the axioms required on the natural isomorphisms, several notions of 2-ring arise. We present three such definitions: the definitions due to Quang and to Jibladze--Pirashvili, and a third definition, equivalent to Quang's definition, that looks like the right definition of a 2-ring in the spirit of Jibladze-Pirahsvili. As pointed out by Quang et al. \cite{Quang-Hanh-Thuy-2008}, in this new definition additional data satisfying the appropriate axioms is required.

Let us begin with the definition introduced in \cite{Quang-1987} by Quang, who calls the corresponding structure an Ann-category; we shall refer to it as a “Quang 2-ring”.

\begin{defn}\label{def_2-anell_Quang}
A {\em Quang 2-ring} is a 5-tuple $\mathbb R=(\Rr,(+,0,a^+,c,l^+,r^+),(\cdot,1,a^\times,l^\times,r^\times),d,e)$ consisting of:
\begin{itemize}
\item a groupoid $\Rr$;
\item a symmetric 2-group structure $(+,0,a^+,c,l^+,r^+)$ on $\Rr$;
\item an additional monoidal structure $(\cdot,1,a^\times,l^\times,r^\times)$ on $\Rr$ (for short, the symbol $\cdot$ between objects is omitted);
\item two families of natural isomorphisms (the {\em left} and {\em right distributors})
\begin{align*}
d_{x,y,z}&:xy+xz\to x(y+z),
\\
e_{x,y,z}&:xz+yz\to (x+y)z.
\end{align*}
\end{itemize}
Moreover, these data must satisfy the following axioms:
\begin{enumerate}
\item[(2R1)] the diagrams
\begin{equation*}
\xymatrix@C=3pc{
x(y+(z+ t))\ar[r]^{id\,\cdot\,a^+_{y,z,t}}
& 
x((y+ z)+ t)
\\ 
xy+ x(z+ t)\ar[u]^{d_{x,y,z+ t}} 
& 
x(y+ z)+ xt\ar[u]_{d_{x,y+ z,t}} 
\\ 
xy+ (xz+ xt)\ar[r]_{a^+_{xy,xz,xt}} \ar[u]^{id\,+\,d_{x,z,t}}
& 
(xy+ xz)+ xt\ar[u]_{d_{x,y,z}+\, id}  
}
\quad 
\xymatrix@C=3pc{
(x+(y+ z))t\ar[r]^{a^+_{x,y,z}\,\cdot\, id} 
& 
((x+ y)+ z)t
\\ 
xt+ (y+ z)t \ar[u]^{e_{x,y+ z,t}}
& 
(x+ y)t+ zt \ar[u]_{e_{x+ y,z,t}} 
\\ 
xt+(yt+ zt)\ar[r]_{a^+_{xt,yt,zt}} \ar[u]^{id\,+\,e_{y,z,t}}
&
(xt+ yt)+ zt\ar[u]_{e_{x,y,t}\,+\,id}
}
\end{equation*}
\begin{equation*}
\xymatrix@C=3pc{
x(y+ z)\ar[r]^{id\,\cdot c_{y,z}}
& 
x(z+y)
\\ 
xy+xz\ar[r]_{c_{xy,xz}} \ar[u]^{d_{x,y,z}} 
& 
xz+ xy\ar[u]_{d_{x,z,y}} }
\quad 
\xymatrix@C=3pc{
(y+ z)x\ar[r]^{c_{y,z}\,\cdot id}
&
 (z+ y)x
\\ 
yx+ zx\ar[r]_{c_{yx,zx}} \ar[u]^{e_{y,z,x}} 
& 
zx+ yx\ar[u]_{e_{z,y,x}} }
\end{equation*}
commute for every objects $x,y,z,t\in\Rr$;

\item[(2R2)] the diagram
\begin{equation*}
\xymatrix@C=3.5pc{
(x+ y)(z+ t)\ar[r]^-{e_{x,y,z+ t}}\ar[d]_{d_{x+ y,z,t}} 
& 
x(z+ t)+ y(z+ t)\ar[r]^{d_{x,z,t}\,+\,d_{y,z,t}} 
& 
(xz+ xt)+(yz+ yt)\ar[d]^{b(xz,xt,yz,yt)}  
\\
(x+ y)z+(x+ y)t\ar[rr]_{\ \ e_{x,y,z}\,+\,e_{x,y,t}} 
& & 
(xz+ yz)+(xt+ yt)}
\end{equation*}
commutes for all objects $x,y,z,t\in\Rsc$, where $b(a,b,c,d):(a+ b)+(c+ d)\stackrel{\cong}{\to}(a+ c)+(b+ d)$ is the canonical associo-commutator of the symmetric 2-group $(\Rr,+,0,a^+,c,l^+,r^+)$ given by (\ref{associocommutador});

\item[(2R3)]
the diagram
\begin{equation*}
\xymatrix{
x(y(z+ t))\ar[rr]^{id\,\cdot d_{y,z,t}}\ar[d]_{a^\times_{x,y,z+ t}} 
&& 
x(yz+ yt)\ar[rr]^-{d_{x,yz,yt}} 
&& 
x(yz)+ x(yt)\ar[d]^{a^\times_{x,y,z}\,+\, a^\times_{x,y,t}} 
\\ 
(xy)(z+ t)\ar[rrrr]_{d_{xy,z,t}} 
&&&& 
(xy)z+ (xy)t }
\end{equation*}
commutes for every objects $x,y,z,t$;

\item[(2R4)]
the diagram
\begin{equation*}
\xymatrix{
x((z+ t)y)\ar[rr]^{id\,\cdot\ e_{z,t,y}}\ar[d]_{a^\times_{x,z+ t,y}} 
&& 
x(zy+ ty)\ar[rr]^{d_{x,zy,ty}} 
&& 
x(zy)+ x(ty)\ar[d]^{a^\times_{x,z,y}\,+\,a^\times_{x,t,y}}  
\\ 
(x(z+ t))y\ar[rr]_{d_{x,z,t}\,\cdot\, id} 
&& 
(xz+ xt)y\ar[rr]_{e_{xz,xt,y}} 
&& 
(xz)y+ (xt)y}
\end{equation*}
commutes for every objects $x,y,z,t$;

\item[(2R5)] the diagram
\begin{equation*}
\xymatrix{
((t+ z)y)x\ar[rr]^{e_{t,z,y}\,\cdot id} 
&& 
(ty+ zy)x\ar[rr]^-{e_{ty,zy,x}} 
&& 
(ty)x+ (zy)x 
\\ 
(t+ z)(yx)\ar[u]^{a^\times_{t+ z,y,x}}\ar[rrrr]_{e_{t,z,yx}} 
&&&& 
t(yx)+ z(yx)\ar[u]_{a^\times_{t,y,x}\,+\,a^\times_{z,y,x}} }
\end{equation*}
commutes for every objects $x,y,z,t$;

\item[(2R6)]
the diagrams
\begin{equation*}
\xymatrix{
1(x+ y)\ar[rr]^{d_{1,x,y}}\ar[rd]_{l^\times_{x+ y}}  
& & 
1 x+ 1 y\ar[ld]^{l^\times_x\,+\,l^\times_y} 
\\ &
x+ y 
&} 
\quad 
\xymatrix{
(x+ y) 1\ar[rr]^{e_{x,y,1}}\ar[rd]_{r^\times_{x+ y}}  
& & 
x 1+ y 1\ar[ld]^{r^\times_x\,+\,r^\times_y} 
\\ 
& 
x+ y
&}
\end{equation*}
commute for every objects $x,y$.
\end{enumerate}
\end{defn}

\begin{rem}{\rm
Axiom (2R1) corresponds to Quang's axiom (Ann-1) in \cite{Quang-1987}, (2R2)-(2R5) to (Ann-2), and (2R6) to (Ann-3). }
\end{rem}

\begin{notacio}{\rm 
The underlying symmetric 2-group $(\Rr,+,0,a^+,c,l^+,r^+)$ of a 2-ring $\RR$ will be denoted by $\RR_+$. }
\end{notacio}

A Jibladze--Pirashvili categorical ring, which we call a {\em Jibladze--Pirashvili 2-ring}, is defined by the same data and axioms except that axiom (2R1) is replaced by a new axiom on the canonical associo-commutators of the underlying symmetric 2-group.  

\begin{defn}\label{def_JP-2-ring}
A {\em Jibladze--Pirashvili 2-ring} is a 5-tuple $\mathbb R=(\Rr,(+,0,a^+,c,l^+,r^+),(\cdot,1,a^\times,l^\times,r^\times),d,e)$ consisting of:
\begin{itemize}
\item a groupoid $\Rr$;
\item a symmetric 2-group structure $(+,0,a^+,c,l^+,r^+)$ on $\Rr$;
\item a monoidal structure $(\cdot,1,a^\times,l^\times,r^\times)$ on $\Rr$;
\item two families of natural isomorphisms (the {\em left} and {\em right distributors})
\begin{align*}
d_{x,y,z}&:xy+xz\to x(y+z),
\\
e_{x,y,z}&:xz+yz\to (x+y)z.
\end{align*}
\end{itemize}
Moreover, these data must satisfy axioms (2R2)-(2R6) as in Definition~\ref{def_2-anell_Quang} together with the following axiom:
\begin{enumerate}
\item[(2R1)$'$] the diagrams
\begin{equation*}
\xymatrix@C=6pc{
(xy+xz)+(xt+ xu))\ar[r]^{b(xy,xz,xt,xu)}\ar[d]_{d_{x,y,z}+d_{x,t,u}} 
& 
(xy+ xt)+(xz+ xu)\ar[d]^{d_{x,y,t}+d_{x,z,u}} 
\\  
x(y+z)+ x(t+u)\ar[d]_{d_{x,y+z,t+u}} 
& 
x(y+t)+ x(z+u)\ar[d]^{d_{x,y+t,z+u}} 
\\  
x[(y+z)+(t+u)]\ar[r]_{id\cdot b(y,z,t,u)}
& 
x[(y+t)+(z+u)]
}
\end{equation*}
\begin{equation*} 
\xymatrix@C=6pc{
(xu+ yu)+(zu+ tu)\ar[r]^{b(xu,yu,zu,tu)}\ar[d]_{e_{x,y,u}+e_{z,t,u}} 
& 
(xu+zu)+(yu+tu)\ar[d]^{e_{x,z,u}+e_{y,t,u}} 
\\ 
(x+y)u+ (z+t)u\ar[d]_{e_{x+y,z+t,u}} 
& 
(x+z)u+ (y+t)u\ar[d]^{e_{x+z.y+t,u}} 
\\  
[(x+y)+(z+t)]u\ar[r]_{b(x,y,z,t)\cdot id} 
& 
[(x+z)+(y+t)]u}
\end{equation*}
commute for every objects $x,y,z,t,u$.
\end{enumerate}
\end{defn}

\begin{rem}{\rm 
The first three commutative diagrams in \cite{Jibladze-Pirashvili-2007} correspond to axioms (2R3)-(2R5), the fourth to (2R6), the fifth and seventh to (2R1)$'$ and the sixth to (2R2).}
\end{rem}

Observe that axiom (2R1) in Definition~\ref{def_2-anell_Quang} corresponds to the condition that the two pairs $(x\cdot-,d_{x,-,-})$ and $(-\cdot z,e_{-,-,z})$, for all objects $x,z$, satisfy axioms (SF1)--(SF2), while axiom (2R1)$'$ in Definition~\ref{def_JP-2-ring} corresponds to the condition that the same two pairs satisfy axiom (AF1). Consequently, according to our discussion in \S~\ref{descripcio_morfismes}, the two definitions are not equivalent: every Quang 2-ring is a Jibladze--Pirashvili 2-ring, but not conversely. 

To find an appropriate definition of 2-ring in the spirit of Jibladze and Pirashvili, we reformulate the axioms in Quang's definition so as to express them in terms of morphisms and 2-morphisms in the 2-category $\mathbf{2SGrp}$. Concretely, it follows from Proposition~\ref{expressio_F_1} that axiom (2R1) can be equivalently formulated as follows:

\begin{itemize}
\item[(2R1)] {\em for every objects $x,z$ the two pairs $(x\cdot-,d_{x,-,-})$ and $(-\cdot z,e_{-,-,z})$ are endomorphisms of the symmetric 2-group $\RR_+$.}
\end{itemize}
This allows one to regard $d$ as endowing each functor $x\cdot-$, for every object $x$, with the structure of a symmetric 2-group endomorphism $d_{x,-,-}$, and similarly for $e$ and the functors $-\cdot z$. Thus, the axiom can be incorporated into the data of a Quang 2-ring. The resulting endomorphisms $(x\cdot-,d_{x,-,-})$ and $(-\cdot z,e_{-,-,z})$ will be denoted by $\mathbb{x}\cdot-$ and $-\cdot\mathbb{z}$, respectively.

The remaining axioms express the condition that certain natural isomorphisms between endomorphisms of $\RR_+$ satisfy (T1) and, therefore, by Proposition~\ref{expressio_F_1}, are monoidal (i.e. 2-morphisms in $\mathbf{2SGrp}$). More precisely, for any objects $x,y,z$ we have the endomorphisms of $\RR_+$
\[
(\mathbb x\cdot -)\boxplus(\mathbb y\cdot -),\,(\mathbb x+\mathbb y)\cdot -,\,(-\cdot \mathbb y)\boxplus(-\cdot \mathbb z),\,-\cdot(\mathbb y+\mathbb z),
\]
with $(\mathbb x\cdot -)\boxplus(\mathbb y\cdot -)$ and $(-\cdot \mathbb y)\boxplus(-\cdot \mathbb z)$ defined by (\ref{F_oplus_F'_objectes})-(\ref{varphi_oplus_varphi'}), and the natural isomorphisms
\begin{align*}
d_{x,y,-}&:(x\cdot -)\boxplus(y\cdot -)\Rightarrow (x+y)\cdot -,
\\
e_{-,y,z}&:(-\cdot y)\boxplus(-\cdot z)\Rightarrow -\cdot(y+z),
\end{align*}
on the one hand, and the endomorphisms of $\RR_+$
\[
(\mathbb x\cdot -)\circ(\mathbb y\cdot -),\,(\mathbb x\mathbb y)\cdot -,\,(\mathbb x\cdot -)\circ(-\cdot \mathbb z),\,(-\cdot \mathbb z)\circ(\mathbb x\cdot -),\,(-\cdot \mathbb y)\circ(-\cdot\mathbb z),\,-\cdot(\mathbb y\mathbb z),
\]
where the composite endomorphisms are defined by (\ref{FcircF'_otimes}), and the natural isomorphisms
\begin{align*}
a^\times_{x,y,-}&:(x\cdot -)\circ(y\cdot -)\Rightarrow (xy)\cdot -,
\\
a^\times_{x,-,z}&:\,(xy)\cdot -,\,(x\cdot -)\circ(-\cdot z)\Rightarrow(-\cdot z)\circ(x\cdot -),
\\
a^\times_{-,y,z}&:(-\cdot y)\circ(-\cdot z)\Rightarrow -\cdot(yz),
\end{align*}
on the other. Then axioms (2R2)-(2R5) can be equivalently formulated as follows:

\begin{itemize}
\item[(2R2)] {\em $d_{x,y,-}$ is a monoidal natural isomorphism for every objects $x,y$ (equivalently, $e_{-,y,z}$ is a monoidal natural isomorphism for every objects $y,z$);}
\item[(2R3)] {\em $a^\times_{x,y,-}$ is a monoidal natural isomorphism for every objects $x,y$;}
\item[(2R4)] {\em $a^\times_{x,-,z}$ is a monoidal natural isomorphism for every objects $x,y$;}
\item[(2R5)] {\em $a^\times_{-,y,z}$ is a monoidal natural isomorphism for every objects $x,y$.}
\end{itemize}
Finally, axiom (2R6) can be equivalently formulated as follows:

\begin{itemize}
\item[(2R6)] {\em $l^\times_-:\mathbb 1\cdot -\Rightarrow id_{\RR_+}$ and $r^\times_-:-\cdot\mathbb 1\Rightarrow id_{\RR_+}$ are monoidal natural isomorphisms.}
\end{itemize}

Consequently, we arrive at the following equivalent but more conceptual description of a Quang 2-ring.

\begin{prop}
A {\em Quang 2-ring} is given by a 5-tuple $\mathbb R=(\Rr,(+,0,b,l^+,r^+),(\cdot,1,a^\times,l^\times,r^\times),d,e)$ consisting of:
\begin{itemize}
\item a groupoid $\Rr$;
\item a symmetric 2-group structure $(+,0,b,l^+,r^+)$ on $\Rr$;
\item a monoidal structure $(\cdot,1,a^\times,l^\times,r^\times)$ on $\Rr$;
\item a structure of symmetric 2-group endomorphism $d_{x,-,-}$ on each functor $x\cdot-$ for every object $x$;
\item a structure of symmetric 2-group endomorphism $e_{-,-,z}$ on each functor $-\cdot z$ for every object $z$.
\end{itemize}
Moreover, these data must satisfy the following axioms for every objects $x,y,z$ in $\Rr$:
\begin{enumerate}
\item[(2R2)] $d_{x,y,-}:(\mathbb x\cdot -)\boxplus(\mathbb y\cdot -)\Rightarrow (\mathbb x+\mathbb y)\cdot -$ (equivalently, $e_{-,y,z}:(-\cdot\mathbb y)\boxplus(-\cdot\mathbb z)\Rightarrow -\cdot(\mathbb y+\mathbb z)$) is a monoidal natural isomorphism;)
\item[(2R3)] $a^\times_{x,y,-}:(\mathbb x\cdot -)\circ(\mathbb y\cdot -)\Rightarrow (\mathbb x\mathbb y)\cdot -$ is a monoidal natural isomorphism;
\item[(2R4)] $a^\times_{x,-,z}:\,(\mathbb x\mathbb y)\cdot -,\,(\mathbb x\cdot -)\circ(-\cdot\mathbb z)\Rightarrow(-\cdot\mathbb z)\circ(\mathbb x\cdot -)$ is a monoidal natural isomorphism;
\item[(2R5)] $a^\times_{-,y,z}:(-\cdot\mathbb y)\circ(-\cdot\mathbb z)\Rightarrow -\cdot(\mathbb y\mathbb z)$ is a monoidal natural isomorphism;
\item[(2R6)] $l^\times_-:\mathbb 1\cdot -\Rightarrow id_{\RR_+}$ and $r^\times_-:-\cdot\mathbb 1\Rightarrow id_{\RR_+}$ aare monoidal natural isomorphisms.
\end{enumerate}
\end{prop}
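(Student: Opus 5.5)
The plan is to establish a bijection between Quang 2-rings in the sense of Definition~\ref{def_2-anell_Quang} and the 5-tuples described in the statement. The bijection will keep the groupoid $\Rr$, the multiplicative monoidal structure and the families $d,e$ fixed, and then check that the two lists of axioms are equivalent. The additive part is handled by the isomorphism (\ref{iso_2SGrp-2ACGrp}): a symmetric 2-group structure $(+,0,a^+,c,l^+,r^+)$ corresponds to the data $(+,0,b,l^+,r^+)$, with $b$ the canonical associo-commutator (\ref{associocommutador}) and $(a^+,c)$ recovered via (\ref{associador_b})--(\ref{commutador_b}). Since $(-)_{ac}$ and $(-)_{sm}$ are mutually inverse and act as the identity on morphisms and 2-morphisms, it makes no difference in which of the two guises we speak of endomorphisms of $\RR_+$ and of monoidal natural transformations between them.

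\emph{Step 1: (2R1) becomes data.} By Corollary~\ref{expressio_F1_bis}(1), a pair $(x\cdot-,d_{x,-,-})$ satisfying (SF1)--(SF2) extends uniquely to a morphism $(x\cdot-,d_{x,-,-},F_0)$ of symmetric 2-groups, with $F_0$ given by (\ref{expressio_varepsilon_bis}); the same holds for $(-\cdot z,e_{-,-,z})$. The diagrams in (2R1) are exactly (SF1) and (SF2) for these pairs. Hence the families $d,e$ satisfying (2R1) are in bijection with the choices of symmetric 2-group endomorphism structures on every $x\cdot-$ and every $-\cdot z$ required in the statement. I will deliberately use the symmetric-monoidal description of these morphisms here, not the AC one, because the AC version would fail to determine $F_0$, as the example of \S\ref{descripcio_morfismes} shows.

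\emph{Step 2: (2R2)--(2R6) become monoidality.} For each of (2R2)--(2R6), I will unfold axiom (T1) for the relevant natural isomorphism and compare it with the diagram in Definition~\ref{def_2-anell_Quang}. For $d_{x,y,-}$ in (2R2), the structure isomorphism of $(\mathbb x\cdot-)\boxplus(\mathbb y\cdot-)$ is given by (\ref{varphi_oplus_varphi'}), namely $b(xz,yz,xt,yt)$ followed by $d_{x,z,t}+d_{y,z,t}$. The structure isomorphism of $(\mathbb x+\mathbb y)\cdot-$ is $d_{x+y,-,-}$, and (T1) then becomes the (2R2) hexagon up to inverting arrows and relabelling the variables. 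For the composites in (2R3)--(2R5), the structure isomorphisms come from (\ref{FcircF'_otimes}), for instance $x\cdot d_{y,z,t}$ followed by $d_{x,yz,yt}$, and (T1) is literally the corresponding diagram. For (2R6), (T1) for $l^\times$ and $r^\times$ is the pair of triangles, since the identity endomorphism has identity structure isomorphisms.

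\emph{Step 3: (T2) is automatic.} By Corollary~\ref{expressio_F1_bis}(2), each of these transformations satisfies (T2) once it satisfies (T1), so "monoidal natural isomorphism" is equivalent to (T1) alone. This completes the equivalence of the axioms.

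The main obstacle is the careful matching in Step 2, in particular for (2R2). First, the variance of $d$ and $e$ has to be tracked: with the stated direction $d_{x,y,z}:xy+xz\to x(y+z)$, the transformation $(\mathbb x\cdot-)\boxplus(\mathbb y\cdot-)\Rightarrow(\mathbb x+\mathbb y)\cdot-$ naturally has components given by $e$, not $d$. Second, one must justify the claimed "equivalently" between the $d$ and $e$ formulations: both (T1) conditions unfold to the same (2R2) diagram read in the two directions.
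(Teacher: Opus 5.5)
Your proposal is correct and follows essentially the paper's own argument. (2R1) is read as (SF1)–(SF2) for the pairs $(x\cdot-,d_{x,-,-})$ and $(-\cdot z,e_{-,-,z})$ and absorbed into the data via the uniqueness of $F_0$ (Corollary~\ref{expressio_F1_bis}(1), which the paper cites as Proposition~\ref{expressio_F_1}), and (2R2)–(2R6) are read as (T1), with (T2) automatic by part (2). You are also right that the transformation $(\mathbb x\cdot-)\boxplus(\mathbb y\cdot-)\Rightarrow(\mathbb x+\mathbb y)\cdot-$ has components $e_{x,y,z}$, so the paper's labels $d$ and $e$ in (2R2) are swapped; note that matching (T1) with the (2R2) diagram, and the claimed equivalence of the two formulations, uses $b(a,c,b,d)=b(a,b,c,d)^{-1}$ for canonical associo-commutators, which follows from symmetric coherence.
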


We may now replace the terms ``symmetric 2-group'' and ``monoidal natural isomorphism'' throughout with the notions of ``AC 2-group'' and ``AC natural isomorphism''. Taking into account the discussion of morphisms between AC 2-groups in \S~\ref{descripcio_morfismes}, we obtain the following alternative definition of a 2-ring.

\begin{defn}\label{def_conceptual_AC-2-ring}
An {\em AC 2-ring} is a 5-tuple $\mathbb R=(\Rr,(+,0,b,l^+,r^+),(\cdot,1,a^\times,l^\times,r^\times),(d,m),(e,n))$ consisting of:
\begin{itemize}
\item a groupoid $\Rr$;
\item an AC 2-group structure $(+,0,b,l^+,r^+)$ on $\Rr$;
\item a monoidal structure $(\cdot,1,a^\times,l^\times,r^\times)$ on $\Rr$;
\item a structure of AC 2-group endomorphism $(d_{x,-,-},m_x)$ on each functor $x\cdot-$ for every object $x$;
\item a structure of AC 2-group endomorphism $(e_{-,-,z},n_z)$ on each functor $-\cdot z$ for every object $z$.
\end{itemize}
Moreover, these data must satisfy the following axioms for every objects $x,y,z$ in $\Rr$ ($\mathbb x\cdot-$ for every object $x$ denotes the endomorphism $(x\cdot-,d_{x,-,-},m_x)$ of the AC 2-group $\RR_+=(\Rr,+,0,b,l^+,r^+)$, and similarly $-\cdot\mathbb z$ for every object $z$, and the sum and composition of endomorphisms are also given by (\ref{F_oplus_F'_objectes})-(\ref{varphi_oplus_varphi'}) and (\ref{FcircF'_otimes}), respectively):
\begin{enumerate}
\item[(2R2)$'$] $d_{x,y,-}:(\mathbb x\cdot -)\boxplus(\mathbb y\cdot -)\Rightarrow (\mathbb x+\mathbb y)\cdot -$ (equivalently, $e_{-,y,z}:(-\cdot\mathbb y)\boxplus(-\cdot\mathbb z)\Rightarrow -\cdot(\mathbb y+\mathbb z)$ is an AC natural isomorphism;)
\item[(2R3)$'$] $a^\times_{x,y,-}:(\mathbb x\cdot -)\circ(\mathbb y\cdot -)\Rightarrow (\mathbb x\mathbb y)\cdot -$ is an AC natural isomorphism;
\item[(2R4)$'$] $a^\times_{x,-,z}:\,(\mathbb x\mathbb y)\cdot -,\,(\mathbb x\cdot -)\circ(-\cdot\mathbb z)\Rightarrow(-\cdot\mathbb z)\circ(\mathbb x\cdot -)$ is an AC natural isomorphism;
\item[(2R5)$'$] $a^\times_{-,y,z}:(-\cdot\mathbb y)\circ(-\cdot\mathbb z)\Rightarrow -\cdot(\mathbb y\mathbb z)$ is an AC natural isomorphism;
\item[(2R6)$'$] $l^\times_-:\mathbb 1\cdot -\Rightarrow id_{\RR_+}$ and $r^\times_-:-\cdot\mathbb 1\Rightarrow id_{\RR_+}$ are AC natural isomorphisms.
\end{enumerate}
\end{defn} 

It readily follows from the isomorphism of 2-categories (\ref{iso_2SGrp-2ACGrp}) that Quang 2-rings and AC 2-rings are essentially the same structure. Indeed, every Quang 2-ring $\RR$ induces an AC 2-ring $\RR_{ac}$: the underlying AC 2-group is $(\RR_+)_{ac}$, while the monoidal structure and the distributors remain unchanged. The required axioms hold because every endomorphism of $\RR_+$ is also an endomorphism of $(\RR_+)_{ac}$, and every monoidal natural transformation between symmetric monoidal functors is also an AC natural transformation when regarded as a transformation between the corresponding AC functors. 

Conversely, every AC 2-ring $\RR'$ induces a Quang 2-ring $\RR'_{sm}$ with the same monoidal structure and distributors, and with underlying symmetric 2-group $(\RR'_+)_{sm}$. Again, the required axioms hold because every endomorphism of $\RR'_+$ is also an endomorphism of $(\RR'_+)_{sm}$, and every AC natural transformation is monoidal. Moreover, both composite maps are the identity because both are the identity as maps between symmetric and AC 2-groups, and the monoidal structure and the distributors are unchanged. Consequently, we obtain the following.

\begin{prop}\label{equivalencia_Q-2-anell_AC-2-anell}
There is a bijective correspondence between Quang 2-rings and AC 2-rings.
\end{prop}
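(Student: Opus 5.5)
The plan is to write down two explicit maps, $\RR\mapsto\RR_{ac}$ from Quang 2-rings to AC 2-rings and $\RR'\mapsto\RR'_{sm}$ back, and check that each is well defined and that they are mutually inverse. Everything is transported through the isomorphism of 2-categories (\ref{iso_2SGrp-2ACGrp}), which acts as the identity on morphisms and 2-morphisms. The one subtlety is the extra data $m_x,n_z$ in an AC 2-ring, which have no counterpart among the listed data of a Quang 2-ring. I will handle this with Corollary~\ref{expressio_F1_bis}. I use the reformulated description of a Quang 2-ring given in the preceding Proposition, so that the zero isomorphisms are implicitly determined.

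\emph{From Quang to AC.} Given a Quang 2-ring $\RR$, I take $(\RR_+)_{ac}$ as the additive AC 2-group and keep $\cdot$, $a^\times,l^\times,r^\times$, $d$ and $e$. By (2R1) in its reformulated form, each $(x\cdot-,d_{x,-,-})$ is an endomorphism of the symmetric 2-group $\RR_+$. Corollary~\ref{expressio_F1_bis}(1) then gives a unique zero isomorphism, which I call $m_x$ and which is given by (\ref{expressio_varepsilon_bis}); likewise it gives $n_z$. Since $(-)_{ac}$ is the identity on morphisms, $(x\cdot-,d_{x,-,-},m_x)$ is an AC endomorphism of $(\RR_+)_{ac}$.

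Axioms (2R2)$'$--(2R6)$'$ follow from (2R2)--(2R6) because monoidal natural transformations are the same as AC natural transformations. One point needs checking: the sums $\boxplus$ and composites $\circ$ of endomorphisms must agree on both sides. For composites, the structural isomorphisms (\ref{FcircF'_otimes})--(\ref{FcircF'_1}) are literally the same in both settings. For sums, (\ref{varphi_oplus_varphi'}) uses the canonical associo-commutator, which is exactly the $b$ of $(\RR_+)_{ac}$. The zero isomorphism (\ref{F_boxplus_F'_0}) uses only $d'=l_0$, which is common data.

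\emph{From AC to Quang.} Given an AC 2-ring $\RR'$, I take $(\RR'_+)_{sm}$ as the additive structure, with the same multiplicative data and the same $d,e$. Since $(-)_{sm}$ is the identity on morphisms, $(x\cdot-,d_{x,-,-},m_x)$ is a symmetric monoidal endomorphism of $(\RR'_+)_{sm}$. This gives (2R1) through diagrams (\ref{diagrama1})--(\ref{diagrama2}); it is exactly here that the datum $m_x$, satisfying (AF2), is used. The remaining axioms again transfer verbatim. By Corollary~\ref{expressio_F1_bis}(1), the $m_x$ and $n_z$ coincide with the isomorphisms determined by (\ref{expressio_varepsilon_bis}), so forgetting them loses no information.

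\emph{Inverse maps and the main obstacle.} Composing the two maps in either order is the identity on the additive part, because $(-)_{ac}$ and $(-)_{sm}$ are mutually inverse, and it is the identity on the multiplicative data and on $d,e$. The main obstacle is the remaining component, the zero isomorphisms. Going AC$\to$Quang$\to$AC, the original $m_x$ is forgotten and then reconstructed from (\ref{expressio_varepsilon_bis}). I must show the reconstruction returns the original $m_x$, and this is precisely the uniqueness clause of Corollary~\ref{expressio_F1_bis}(1) applied in $(\RR'_+)_{sm}$. It is valid there because (SF1)--(SF2) now hold; they were established using $m_x$ in the previous step. This is exactly the point where the Jibladze--Pirashvili definition, which lacks $m$ and $n$, fails.
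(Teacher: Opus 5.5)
Your proposal is correct and follows the same route as the paper. Transport the additive structure through the isomorphism (\ref{iso_2SGrp-2ACGrp}), which is the identity on morphisms and 2-morphisms, and keep the multiplicative data and the distributors unchanged. Your explicit use of the uniqueness clause of Corollary~\ref{expressio_F1_bis}(1) to recover $m_x,n_z$ in the round trip AC$\to$Quang$\to$AC, and your check that $\boxplus$ and $\circ$ agree in both settings, spell out what the paper leaves implicit by treating ``symmetric 2-group endomorphism structure'' as including its uniquely determined zero isomorphism.
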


When the axioms in Definition~\ref{def_conceptual_AC-2-ring} are expressed in terms of commutative diagrams, we obtain the following description of an AC 2-ring or, equivalently, a Quang 2-ring.

\begin{prop}\label{def_AC-2-ring}
An AC 2-ring is given by a 7-tuple $\mathbb R=(\Rr,(+,0,b,l^+,r^+),(\cdot,1,a^\times,l^\times,r^\times),d,e,m,n)$ consisting of:
\begin{itemize}
\item a groupoid $\Rr$;
\item an AC 2-group structure $(+,0,b,l^+,r^+)$ on $\Rr$;
\item a monoidal structure $(\cdot,1,a^\times,l^\times,r^\times)$ on $\Rr$;
\item two families of natural isomorphisms (the {\em left} and {\em right distributors})
\begin{align*}
d_{x,y,z}&:xy+xz\to x(y+z),
\\
e_{x,y,z}&:xz+yz\to (x+y)z;
\end{align*}
\item two families of natural isomorphisms (the {\em left} and {\em right absorbing isomorphisms})
\begin{align*}
m_{x}&:0\to x0,
\\
n_{x}&:0\to 0x.
\end{align*}
\end{itemize}
Moreover, these data must satisfy axioms (2R2)--(2R6) as in Definition~\ref{def_2-anell_Quang} together with the following axiom:
\begin{enumerate}
\item[(2R1)$''$] the diagrams
\begin{equation*}
\xymatrix@C=6pc{
(xy+xz)+(xt+ xu))\ar[r]^{b(xy,xz,xt,xu)}\ar[d]_{d_{x,y,z}+d_{x,t,u}} 
& 
(xy+ xt)+(xz+ xu)\ar[d]^{d_{x,y,t}+d_{x,z,u}} 
\\  
x(y+z)+ x(t+u)\ar[d]_{d_{x,y+z,t+u}} 
& 
x(y+t)+ x(z+u)\ar[d]^{d_{x,y+t,z+u}} 
\\  
x[(y+z)+(t+u)]\ar[r]_{id\cdot b(y,z,t,u)}
& 
x[(y+t)+(z+u)]
}
\end{equation*}
\begin{equation*} 
\xymatrix@C=6pc{
(xu+ yu)+(zu+ tu))\ar[r]^{b(xu,yu,zu,tu)}\ar[d]_{e_{x,y,u}+e_{z,t,u}} 
& 
(xu+zu)+(yu+tu)\ar[d]^{e_{x,z,u}+e_{y,t,u}} 
\\  
(x+y)y+ (z+t)u\ar[d]_{e_{x+y,z+t,u}} 
& 
(x+z)u+ (y+t)u\ar[d]^{e_{x+z.y+t,u}} 
\\  
[(x+y)+(z+t)]u\ar[r]_{b(x,y,z,t)\cdot id} 
& 
[(x+z)+(y+t)]u}
\end{equation*}
\[
\xymatrix{
0+xy\ar[r]^-{l^+_{xy}}\ar[d]_-{m_x+id} & xy
\\
x0+xy\ar[r]_-{d_{x,0,y}} & x(0+y)\ar[u]_-{id\cdot l^+_y} }
\quad
\xymatrix{
xy+0\ar[r]^-{r^+_{xy}}\ar[d]_-{id+m_x} & xy
\\
xy+x0\ar[r]_-{d_{x,y,0}} & x(y+0)\ar[u]_-{id\cdot r^+_y} }
\]
\[
\xymatrix{
0+xy\ar[r]^-{l^+_{xy}}\ar[d]_-{n_y+id} & xy
\\
0y+xy\ar[r]_-{e_{0,x,y}} & (0+x)y\ar[u]_-{l^+_x\cdot id} }
\quad
\xymatrix{
xy+0\ar[r]^-{r^+_{xy}}\ar[d]_-{id+n_y} & xy
\\
xy+0y\ar[r]_-{e_{x,0,y}} & (x+0)y\ar[u]_-{r^+_x\cdot id} }
\]
commute for every objects $x,y,z,t,u$.
\end{enumerate}
\end{prop}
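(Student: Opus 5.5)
We unpack Definition~\ref{def_conceptual_AC-2-ring} axiom by axiom, translating each requirement on endomorphisms and AC natural transformations into commutative diagrams. The data match directly. The AC endomorphism structure $(d_{x,-,-},m_x)$ on $x\cdot-$ consists of the family $d_{x,y,z}:xy+xz\to x(y+z)$ together with a zero isomorphism $m_x:0\to x0$. Likewise, $(e_{-,-,z},n_z)$ gives $e$ and $n_z:0\to 0z$. Naturality of $m$ and $n$ in $x$ will follow from the axioms below, or can simply be imposed, as in the statement.

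First we treat the endomorphism structures. By Definition~\ref{AC-functor}, the pair $(x\cdot-,d_{x,-,-})$ with zero isomorphism $m_x$ is an AC functor iff (AF1) and (AF2) hold. Axiom (AF1) for $F=x\cdot-$ and $F_\oplus=d_{x,-,-}$ is literally the first diagram of (2R1)$''$, and (AF1) for $-\cdot u$ gives the second. Axiom (AF2), with $F_0=m_x$, yields the two diagrams involving $l^+,r^+,m$, once we note that $F_\oplus(0,y)=d_{x,0,y}$ and $Fl_y=id\cdot l^+_y$; for $-\cdot y$ it yields the two diagrams involving $n_y$. Hence the endomorphism data are equivalent to (2R1)$''$.

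Next we turn to the 2-morphism axioms (2R2)$'$--(2R6)$'$. By \S\ref{descripcio_morfismes}, an AC natural transformation between AC functors of AC 2-groups is the same as a monoidal natural transformation between the corresponding symmetric monoidal functors, and (T2) is automatic by Corollary~\ref{expressio_F1_bis}(2). So each axiom reduces to (T1) only. We then write (T1) for each transformation, using the structure maps of sums (\ref{varphi_oplus_varphi'}) and composites (\ref{FcircF'_otimes}). For $d_{x,y,-}$, (T1) says that $d_{x+y,z,t}\circ(d_{x,y,z}\,\text{-ish})$ agrees with $(e+e)\circ b\circ(d+d)$, which is exactly (2R2), in its form with $e$ and $b$. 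For $a^\times_{x,y,-}$, the composite structure map $x\cdot d_{y,z,t}$ followed by $d_{x,yz,yt}$ gives (2R3). The other cases give (2R4) and (2R5) in the same way, and $l^\times,r^\times$ give (2R6), since the identity endomorphism has identity $F_\oplus$. One caveat is needed: (2R2) as stated uses the canonical associo-commutator of $a^+,c$, whereas here $b$ is the given one. These coincide because $(-)_{ac}$ and $(-)_{sm}$ are inverse, by Theorem~\ref{teorema_equivalencia}.

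The main obstacle is bookkeeping: making sure that the direction conventions in (T1) match the diagrams as drawn, and confirming the equivalence $d\leftrightarrow e$ asserted in (2R2)$'$. For the latter, we check that (T1) for $d_{x,y,-}$ and for $e_{-,y,z}$ produce the same hexagon (2R2) after reindexing. We would verify this first, and note that both sides of (2R2) are exactly the (T1) square read with the variables appropriately renamed.
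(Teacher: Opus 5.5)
Your proposal is correct and is essentially the paper's own argument: (2R1)$''$ is exactly (AF1)--(AF2) for $(x\cdot-,d_{x,-,-},m_x)$ and $(-\cdot z,e_{-,-,z},n_z)$, and (2R2)$'$--(2R6)$'$ reduce to (T1), hence to (2R2)--(2R6), because AC natural transformations are monoidal ones with (T2) automatic. Your side remarks are correct refinements that the paper leaves implicit: naturality of $m,n$ does follow by applying the automatic (T2) to $f\cdot-$ and $-\cdot f$, and the given $b$ is indeed the canonical associo-commutator of $(\RR_+)_{sm}$. The one slip is that your displayed form of (T1) for the distributor is scrambled. It should read $e_{x,y,z+t}\circ(d_{x,z,t}+d_{y,z,t})\circ b(xz,yz,xt,yt)=d_{x+y,z,t}\circ(e_{x,y,z}+e_{x,y,t})$, which becomes (2R2) once you invert the arrows drawn in Quang's direction and use $b(a,b,c,d)^{-1}=b(a,c,b,d)$.
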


\begin{proof}
Being an AC natural isomorphism is the same as being a monoidal natural isomorphism, so that conditions (2R2)$'$--(2R6)$'$ coincide with conditions (2R2)--(2R6). Axiom (2R1)$''$ expresses the fact that $(x\cdot,d_{x,-,-},m_x)$ and $(-\cdot z,e_{-,-,z},n_z)$ are AC functors (hence, endomorphisms of the AC 2-group $\RR_+$).
\end{proof}

Since giving an AC 2-group structure on $\Rr$ is equivalent to giving a symmetric 2-group structure, we recover Theorem~4 in \cite{Quang-Hanh-Thuy-2008}, which states that every Jibladze--Pirashvili 2-ring for which there exist left and right absorbing isomorphisms as above making the last four diagrams in (2R1)$''$ commute is a Quang 2-ring. In fact, we have shown that this correspondence is bijective.

\appendix

\section{On the homomorphisms of 2-groups, and the 2-morphisms between them}

Let $\GG=(\Gg,\otimes,1,a,l,r)$ and $\GG'=(\Gg',\otimes',1',a',l',r')$ be two 2-groups. By definition, a homomorphism of 2-groups from $\GG$ to $\GG'$ is a monoidal functor between them as monoidal groupoids. Therefore it consists of a triple $\FF=(F,F_\otimes,F_1)$ with $F:\Gg\to\Gg'$ a functor, $F_\otimes=\{F_\otimes(x,y):Fx\otimes' Fy\to F(x\otimes y),\,x,y\in\Gg\}$ a family of natural isomorphisms, and $F_1:1'\to F1$ an isomorphism, all these data satisfying axioms (SF1) and (SF3) in Definition~\ref{functor_monoidal_simetric}. The purpose of this Appendix is to prove that the isomorphism $F_1$ is uniquely determined by the data $(F,F_\otimes)$ and the required axioms. In fact, the result is true in the following more general situation. 

\begin{prop}\label{unicitat_varepsilon}
Let $\CC=(\Cc,\otimes,1,a,l,r)$ be a monoidal category and $\GG=(\Gg,\otimes',1',a',l',r')$ a 2-group. Then:
\begin{itemize}
\item[(1)] for any pair $(F,F_\otimes)$, with $F:\Gg\to\Gg'$ a functor and $F_\otimes$ a family of natural isomorphisms $F_\otimes(x,y):Fx\otimes' Fy\to F(x\otimes y)$ satisfying (SF1), there exists one and only one isomorphism $F_1:1'\to F1$ such that $\FF=(F,F_\oplus,F_1)$ is a morphism of 2-groups, and it is given by the composite$F_1$ is given by the composite morphism
\begin{equation}\label{expressio_varepsilon}
\hspace{1truecm}\xymatrix@R=1.7pc@C=2.7pc{
1'\ar[r]^-{\eta} & (F1)^{*}\otimes' F1\ar[r]^-{id\otimes' F(d^{-1})} &  (F1)^{*}\otimes' F(1\otimes 1)\ar[d]^-{id\otimes' F_\otimes(1,1)^{-1}} & &&& 
\\
&& (F1)^{*}\otimes' (F1\otimes' F1)\ar[d]^-{a_{(F1)^{*},F1,F1}} &&&
\\
&& ((F1)^{*}\otimes' F1)\otimes' F1\ar[r]_-{\eta^{-1}\otimes' id} & 1'\otimes' F1\ar[r]_-{l_{F1}} & F1
}
\end{equation}
for any (weak) inverse $(F1)^{*}$ of $F1$, and any isomorphism $\eta:1'\to (F1)^{*}\otimes' F1$;
\item[(2)] if $\FF'=(F',F'_\otimes,F'_1):\CC\to\GG$ is any other monoidal functor and $\tau:F\Rightarrow F'$ is a natural transformation satisfying axiom (T1) in Definition~\ref{trans_natural_monoidal} then it also satisfies (T2).
\end{itemize}
\end{prop}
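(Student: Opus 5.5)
The plan is to use that in a 2-group the functor $Fx\otimes'-$ (and $-\otimes' Fx$) is an equivalence, hence faithful and essentially surjective on objects. Equivalently, one can cancel tensor factors on morphisms: if $f\otimes' id_{A}=g\otimes' id_A$ with $A$ weakly invertible, then $f=g$.

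For uniqueness in (1), suppose $F_1$ satisfies (SF3). I would specialize the left unit diagram of (SF3) to $x=1$, which gives
\[
F_\otimes(1,1)\circ(F_1\otimes' id_{F1}) = F(l_1)^{-1}\circ l'_{F1}.
\]
Here $l_1=d$ by Kelly's result. Thus $F_1\otimes' id_{F1}$ is determined by $(F,F_\otimes)$, namely it equals $F_\otimes(1,1)^{-1}\circ F(d^{-1})\circ l'_{F1}$. Cancelling $F1$ on the right then determines $F_1$. To get the explicit formula (\ref{expressio_varepsilon}), I would tensor on the left by $(F1)^*$, conjugate by $\eta$, and use the associator and naturality of $l'$ to rewrite $F_1$ through $id_{(F1)^*}\otimes'(F_1\otimes' id)$. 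The identity $l'_{1'\otimes' A}=id\otimes' l'_A$ (or the standard coherence) then turns the expression into exactly the displayed composite. Note that this also shows the composite is independent of the choice of $((F1)^*,\eta)$.

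For existence, I would define $F_1$ by (\ref{expressio_varepsilon}). By construction, and running the previous computation backwards, the left unit axiom holds at $x=1$. To get the left unit axiom for general $x$, I would apply (SF1) to the triple $(1,1,x)$. The pentagon-type identity, together with the triangle axiom and $l_{1\otimes x}$-coherence in $\CC$ and in $\GG$, gives
\[
\Phi_{1\otimes' x}=(\text{the }x=1\text{ instance})\otimes' id,
\]
transported along $F_\otimes$. Here $\Phi_x$ is the composite $F(l_x)\circ F_\otimes(1,x)\circ(F_1\otimes' id)\circ l'^{-1}_{Fx}$, which should be $id_{Fx}$. Since $\Phi$ is natural in $x$ and the $x=1$ case gives $\Phi_1=id$, the (SF1)-relation yields $\Phi_1\otimes' \Phi_x$-type compatibility. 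From this I would deduce $F1\otimes'\Phi_x$ trivial, and cancel $F1$ using invertibility. I expect this to be the main obstacle: getting the correct coherence bookkeeping so that the relation really isolates $\Phi_x$ tensored with an invertible object.

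The right unit axiom is handled symmetrically. First I would show that the right-unit candidate $F'_1$, obtained by the same argument at $x=1$ via $r_1=d$, satisfies $id\otimes' F'_1$-compatibility. Then I would show $F'_1=F_1$. For this I would apply (SF1) to $(1,1,1)$ and use $l_1=r_1$ and $l'_{1'}=r'_{1'}$, comparing $F_1\otimes' id_{F1}$ and $id_{F1}\otimes' F'_1$ inside $F1\otimes' F1$.

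For (2), I would set $\sigma=\tau_1\circ F_1$ and compare it with $F'_1$. By naturality of $\tau$, (T1) at $(1,1)$, and (SF3) for both $\FF$ and $\FF'$ at $x=1$, I get
\[
\sigma\otimes' \tau_1 = F'_1\otimes' \tau_1
\]
as maps $1'\otimes' F1\to F'1\otimes' F'1$. Since $\tau_1$ is an isomorphism and $F'1$ is invertible, cancelling gives $\sigma=F'_1$, which is (T2).
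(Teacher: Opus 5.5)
Your proof works, but it is organized differently from the paper's. The paper reduces all of (SF3) to one square, (\ref{varepsilon_cond1'}), which is the \emph{right} unit square at $x=1$. From it, (SF1) at $(1,1,x)$ plus cancellation of $F1\otimes'-$ gives the left unit square at every $x$, and the right unit square at every $x$ is obtained similarly. The paper then checks by naturality and interchange that the composite (\ref{expressio_varepsilon}) satisfies (\ref{varepsilon_cond1'}). You instead start from the \emph{left} unit square at $x=1$, propagate it, handle the right unit with a second candidate $F'_1$, and reconcile the two.

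Two of your steps are easier than you make them.
\begin{itemize}
\item Propagation: your computation actually gives $\Phi_{1\otimes x}=F_\otimes(1,x)\circ(\Phi_1\otimes' id_{Fx})\circ F_\otimes(1,x)^{-1}$. So $\Phi_1=id$ forces $\Phi_{1\otimes x}=id$, and naturality of $\Phi$ along the isomorphism $l_x:1\otimes x\to x$ gives $\Phi_x=id$. The detour through ``$F1\otimes'\Phi_x$ trivial'' and a cancellation is unnecessary.
\item The identity $F'_1=F_1$ needs no (SF1). The two unit squares at $x=1$ together with $l_1=r_1$ give $(F_1\otimes' id_{F1})\circ(l'_{F1})^{-1}=(id_{F1}\otimes' F'_1)\circ(r'_{F1})^{-1}$. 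For every isomorphism $f:1'\to Y$ one has the swap identity $(f\otimes' id_Y)\circ(l'_Y)^{-1}=(id_Y\otimes' f)\circ(r'_Y)^{-1}$: precompose both sides with $f$, then use naturality of $l',r'$, interchange and $l'_{1'}=r'_{1'}$. Hence $id_{F1}\otimes' F'_1=id_{F1}\otimes' F_1$, and cancelling $F1\otimes'-$ gives $F'_1=F_1$.
\end{itemize}
Your part (2) is the paper's argument specialized to $x=1$.

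The one step you mis-justify is deriving (\ref{expressio_varepsilon}) from the left unit square. Put $\Psi=F_\otimes(1,1)^{-1}\circ F(d^{-1})$; the composite is then $l'_{F1}\circ(\eta^{-1}\otimes' id)\circ a'\circ(id\otimes'\Psi)\circ\eta$. This composite is adapted to the right unit square: substituting $\Psi=(id_{F1}\otimes' F_1)\circ(r'_{F1})^{-1}$ collapses it to $F_1$ by naturality, interchange and coherence alone.

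Substituting instead $\Psi=(F_1\otimes' id_{F1})\circ(l'_{F1})^{-1}$ produces a zig-zag, in which $\eta^{-1}$ annihilates the copy of $F1$ created by $F_1$. Reducing that zig-zag to $F_1$ is equivalent to the swap identity for $F_1$. That identity does not follow from the associator and $l'_{1'\otimes' A}=id\otimes' l'_A$; it uses $l'_{1'}=r'_{1'}$ and the invertibility of $F_1$.

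So prove the swap identity first. It shows that the left and right unit squares at $x=1$ are equivalent, after which one candidate suffices and the formula follows as in the paper. Your route gives a symmetric treatment in which the passage from $x=1$ to all $x$ uses naturality rather than tensor cancellation. The paper's route needs only one candidate and verifies the formula directly.
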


\begin{proof}
(1) The functors $-\otimes' Fx,Fx\otimes'-:\Gg\to\Gg$ for any object $x$ in $\Cc$ are equivalences of categories, with (weak) inverses respectively given by the functors $-\otimes'(Fx)^*,(Fx)^*\otimes'-$ for any (weak) inverse of $Fx$. It follows that the isomorphism $F_1$ satisfying (SF3) is uniquely determined by either of the morphisms $id_{Fx}\otimes F_1$ or $F_1\otimes id_{Fx}$. Hence, if it exists, it is necessarily unique. To prove that, for any pair $(F,F_\otimes)$ satisfying (SF1), the morphism given by (\ref{expressio_varepsilon}) satisfies (SF3) notice first that a morphism $F_1$ satisfies (SF3) if and only if it makes the diagram
\begin{equation}\label{varepsilon_cond1'}
\xymatrix{
F1\otimes'1'\ar[r]^{id\otimes' F_1}\ar[d]_{r'_{F1}} & F1\otimes' F1\ar[d]^{F_\otimes(1,1)}
\\
F1 & F(1\otimes 1)\ar[l]^{Fd}
}
\end{equation}
commute, where $d:1\otimes 1\to 1$ is the basic unitor of $\CC$. Clearly, this diagram commutes if $F_1$ satisfies (SF3). Conversely, since each functor $-\otimes' Fx$ is an equivalence, the commutativity of this diagram implies by the functoriality of $\otimes'$ the commutativity of the diagrams
\[
\xymatrix{
(F1\otimes'1')\otimes' Fx\ar[rr]^{(id\otimes'F_1)\otimes' id}\ar[d]_{r'_{F1}\otimes' id} && (F1\otimes' F1)\otimes' Fx\ar[d]^{F_\otimes(1,1)\otimes' id}
\\
F1\otimes' Fx && F(1\otimes 1)\otimes' Fx\ar[ll]^{Fd\otimes' id}
}
\]
for each object $x$, which is in turn equivalent to the commutativity of the bigger diagrams
\begin{equation}\label{varepsilon_cond3}
\xymatrix{
F1\otimes'(F1\otimes' Fx)\ar@/^1.5pc/[rrrd]^{a'_{F1,F1,Fx}}\ar@{}@<-4ex>[rr]|{\textcircled{N}} &&&
\\
F1\otimes'(1'\otimes' Fx)\ar[u]^{id\otimes'(F_1\otimes' id)}\ar@/_1pc/[dr]_{id\otimes'l'_{Fx}}\ar[r]^{a'_{F1,1',Fx}}
& 
(F1\otimes'1')\otimes' Fx\ar[rr]^{(id\otimes'F_1)\otimes' id}\ar[d]^{r'_{F1}\otimes' id} \ar@{}@<4ex>[l]|{\textcircled{C}}
&& 
(F1\otimes' F1)\otimes' Fx\ar[d]^{F_\otimes(1,1)\otimes' id}
\\
& F1\otimes' Fx && F(1\otimes 1)\otimes' Fx\ar[ll]^{Fd\otimes' id}
}
\end{equation}
obtained by adding diagrams $\textcircled{N}$ and $\textcircled{C}$, which commute by the naturality of $a'$ and the coherence axioms on $a',l',r'$. Let us consider now the diagram, whose arrows are all invertible:
\[
\xymatrix{
F(1\otimes(1\otimes x))\ar[dd]_{Fa_{1,1,x}}\ar[rd]^{F(id\otimes l_x)}  \ar@{}@<-9ex>[r]|{\textcircled{C}}
&& 
F1\otimes' F(1\otimes x)\ar[ll]_{F_\otimes(1,1\otimes x)}\ar[d]_{id\otimes'Fl_x} \ar@{}@<-4ex>[rr]|{\textcircled{A}}
\ar@{}@<4ex>[ll]|{\textcircled{N}}
&& 
F1\otimes'(F1\otimes' Fx)\ar[dd]^{a'_{F1,F1,Fx}}\ar[ll]_{id\otimes'F_\otimes(1,x)}
\\
& 
F(1\otimes x) \ar@{}@<5ex>|{\textcircled{N}}
& 
F1\otimes' Fx\ar[l]_{F_\otimes(1,x)}
& 
F1\otimes'(1'\otimes' Fx)\ar[l]_{id\otimes' l'_{Fx}}\ar[ru]^{id\otimes'(F_1\otimes' id)}\ar@{}@<-5ex>|{\textcircled{B}} 
& 
\\
F((1\otimes 1)\otimes x)\ar[ru]_{F(d\otimes id)} 
&& 
F(1\otimes 1)\otimes' Fx\ar[ll]^{F_\otimes(1\otimes 1,x)}\ar[u]_{Fd\otimes'id} 
&& 
(F1\otimes' F1)\otimes' Fx\ar[ll]^{F_\otimes(1,1)\otimes' id}
}
\]
In this diagram, the outer hexagon commutes because of axiom (SF1) on $(F,F_\otimes)$, subdiagrams labelled $\textcircled{N}$ by the naturality of $F_\otimes(x,y)$ and subdiagram $\textcircled{C}$ by the coherence of $a,l,r$. It follows that the commutativity of $\textcircled{A}$ is equivalent to the commutativity of $\textcircled{B}$, and $\textcircled{B}$ is the outer diagram in (\ref{varepsilon_cond3}), which is indeed commutative. Since $F1\otimes'-$ is an equivalence of categories, the diagram
\begin{equation}\label{varepsilon_cond4}
\xymatrix{
1'\otimes'Fx\ar[rr]^{F_1\otimes' id}\ar[d]_{l'_{Fx}} && F1\otimes' Fx\ar[d]^{F_\otimes(1,x)}
\\
Fx && F(1\otimes x)\ar[ll]^{F(l_x)}
}
\end{equation}
commutes. The commutativity of the first diagram in (SF3) follows similarly by applying the functor $Fx\otimes'-$ to the diagram (\ref{varepsilon_cond1'}). By the uniqueness of $F_1$, it only remains to check that (\ref{varepsilon_cond1'}), or equivalently, the product functor of (\ref{varepsilon_cond1'}) with $(F1)^*$ on the right, commutes when $F_1$ is given by (\ref{expressio_varepsilon}), and this follows from the diagram
\[ 
\xymatrix{
F1 \ar@{}@<-5ex>[r]|{\textcircled{N}}
& 
1'\otimes' F1\ar[l]-_{l'_{F1}} \ar[r]^-{\eta\otimes' id}\ar@{}@<-5ex>[r]|{\textcircled{F}}
& 
(F1^*\otimes' F1)\otimes' F1 \ar@{}@<-5ex>[r]|{\textcircled{N}}
& 
F1^*\otimes'(F1\otimes' F1)\ar[l]_{a'}\ar[r]^-{id\otimes'\varphi_{1,1}}\ar@{}@<-5ex>[r]|{\textcircled{A}}
& 
F1^*\otimes' F(1\otimes 1)\ar[d]|{id\otimes' Fd}
\\
1'\ar[u]^{F_1} 
& 
1'\otimes' 1'\ar[l]_-{d'}\ar[u]|{id\otimes'F_1}\ar[d]^{d'}\ar[r]_-{\eta\otimes' id}\ar@{}@<-5ex>[r]|{\textcircled{N}}
& 
(F1^*\otimes' F1)\otimes' 1'\ar[u]|{(id\otimes' id)\otimes'F_1}\ar[d]|{r'_{F1^*\otimes' F1}}\ar@{}@<-5ex>[rr]|{\textcircled{C}}
& 
F1^*\otimes'(F1\otimes' 1')\ar[l]^-{a'}\ar[r]_-{id\otimes' r'_{F1}}\ar[u]|{id\otimes'(id\otimes'F_1)} 
& 
F1^*\otimes' F1\ar@/^1.5pc/[lld]^-{id}
\\
& 
1'\ar[lu]^{id} \ar[r]_{\eta}
& 
F1^*\otimes' F1
&&
}
\] 
for any isomorphism $\eta:1'\to (F1)^*\otimes' F1$, where the subdiagrams labelled $\textcircled{N}$ are naturality squares, $\textcircled{F}$ commutes by the functoriality of $\otimes'$, $\textcircled{C}$ by the coherence axioms on $a',l',r'$. By hypothesis, the outer diagram commutes and hence, $\textcircled{A}$ also commutes. 

(2) Let us consider the diagram
\[
\xymatrix{
1'\otimes' Fx\ar[ddd]_{id\otimes'\tau_x}\ar[dr]^{F_1\otimes' id}\ar@/^1.5pc/[rrrd]^{l'_{Fx}} 
\ar@{}@<-13ex>[r]|{\textcircled{b}}
& & & 
\\
& 
F1\otimes' Fx\ar[r]^{F_\otimes(1,x)}\ar[d]_{\tau_1\otimes'\tau_x} \ar@{}@<-5ex>[r]|{\textcircled{c}}
\ar@{}@<5ex>[r]|{\textcircled{a}}
& 
F(1\otimes x)\ar[r]_-{Fl_x}\ar[d]_{\tau_{1\otimes x}} \ar@{}@<-5ex>[r]|{\textcircled{d}}
& 
Fx\ar[d]^{\tau_x}
\\
& 
F'1\otimes' F'x\ar[r]_{F'_\otimes(1,x)}  \ar@{}@<-5ex>[r]|{\textcircled{e}}
& 
F'(1\otimes x)\ar[r]^-{F'l_x}
& 
F'x
\\
1'\otimes'F'x\ar[ru]_{F'_1\otimes' id}\ar@/_1.5pc/[rrru]_{l'_{F'x}} 
&&&
}
\]
The outer diagram commutes by the naturality of $l'$, diagrams $\textcircled{a}$ and $\textcircled{e}$ by axiom (SF3) on $\FF$ and $\FF'$, respectively, diagram $\textcircled{c}$ by (T1) on $\tau$, and diagram $\textcircled{d}$ by the naturality of $\tau$. Since all involved morphisms are invertible, $\textcircled{b}$ also commutes, and this diagram is the same as the diagram
\[
\xymatrix{
1'\otimes' Fx\ar[rr]^{F_1\otimes'id}\ar[d]_{F_1'\otimes' id} & &F1\otimes' Fx\ar[d]^{\tau_1\otimes' id}
\\
F'1\otimes' Fx\ar@{:}[rr]\ar[dr]_{id\otimes'\tau_x} && F'1\otimes' Fx\ar[dl]^{id\otimes' \tau_x}
\\
& F'1\otimes' F'x &
}
\]  
whose commutativity is equivalent to axiom (T2) because $\tau$ is invertible and $-\otimes' Fx$ is an equivalence.
\end{proof}

\noindent
\bibliography{bibliografia}{}
\bibliographystyle{plain}

\end{document}